\documentclass[12pt]{article}
\oddsidemargin=-4pt
\textwidth=17.0cm  
\topmargin=-1.6cm
\textheight=22.7cm   
\flushbottom       

\usepackage[english]{babel}
\usepackage{hyperref}
\usepackage{indentfirst}
\usepackage{amsfonts,amssymb,amsthm,amsmath}
\sloppy \theoremstyle{theorem}
\newtheorem*{Th}{Theorem}
\theoremstyle{lemma}
\newtheorem{Lm}{Lemma}
\theoremstyle{plain}
\newtheorem{coll}{Corollary}

\newcommand{\probel}{\mbox{ }}
\newcommand{\ir}{[t_0,\vartheta_0]\times \mathbb{R}^n}
\newcommand{\iro}{(t_0,\vartheta_0)\times \mathbb{R}^n}

\newcommand{\SDup}[2]{D^+_{\rm D}{#1}({#2})}
\newcommand{\SDdn}[2]{D^-_{\rm D}{#1}({#2})}

\newcommand{\pis}[2]{[t_0,\vartheta_0]\times \Pi(e_{#1},{#2})\times S^{(n-1)}}

\newcommand{\sgn}{{\rm sgn}}
\newcounter{tmp}
\newcounter{tmp1}
\newcounter{tmp2}

\newcounter{tmp4}

\begin{document}
\title{On a Structure of the Set of Differential Games Values\footnote{Work is supported by RFBR (grants No 06-01-00414, 07-01-96088).}}%

\author{Yurii Averboukh} 
\date{\fontsize{10}{12}\selectfont Institute of Mathematics and Mechanics UrB RAS\\
S. Kovalevskaya street 16\\
620219, GSP-384, Ekaterinburg
Russia\\
ayv@imm.uran.ru, averboukh@gmail.com}
\maketitle
\begin{abstract}
In this paper the set of value functions of all-possible zero-sum differential games with terminal payoff is characterized. The necessary and sufficient condition  for a given function to be a value of some differential game with terminal payoff is obtained.
\end{abstract}

\section{Introduction}
The paper is devoted to the theory of two-controlled, zero sum differential games. Within the framework of this theory the control processes  under uncertainty are studied. N.N. Krasovskii and A.I. Subbotin introduced the feedback formalization of differential games \cite{NN_PDG_en}. This formalization allows them to prove the existence of value function.

In this paper we characterize the set of value functions of all-possible zero-sum differential games with terminal payoff. The value function is minimax (or viscosity) solution of corresponding Isaacs-Bellman equation (Hamilton-Jacobi equation) \cite{Subb_book}.

One can consider a differential game within usual constraints as a complex of two control spaces, game dynamic and terminal payoff function.  The time interval and state space of game are assumed to be fixed.
In this paper the following problem is considered: let the locally lipschitzian function $\varphi(t,x)$ be given, do there exist control spaces, dynamic function and terminal payoff function such that the function $\varphi(t,x)$ is the value of corresponding differential game?

\section{Preliminaries}
In this section we recall the main notions of the theory of zero-sum differential games. We follow the formalization of N.N.~Krasovskii and A.I.~Subbotin.

Usually in the theory of differential games the following problem is considered \cite{NN_PDG_en}. Let the controlled system \begin{equation}\label{system}
    \dot{x}=f(t,x,u,v), \ \ t\in [t_0,\vartheta_0], \ \ x\in\mathbb{R}^n, \ \ u\in P,\ \ v\in Q
\end{equation} and payoff functional $\sigma(x(\vartheta_0))$ be given. Here $u$ and $v$ are controls of the player $U$ and the player $V$ respectively. The player $U$ tries to minimize the payoff and the player $V$ wishes to maximize the payoff.  The purpose is to find the value of corresponding game. The value is a function $\varphi$ from $\ir$ to $\mathbb{R}$.

Suppose that $P$ and $Q$ are finite-dimensional compacts.
The function $f$ satisfies the following assumption:
\begin{list}{F\arabic{tmp}.}{\usecounter{tmp}}
    \item $f$ is continuous;
    \item $f$ is locally lipschitzian with respect to the phase variable;
    \item there exists constant $\Lambda_f$ such that for every $t\in [t_0,\vartheta_0]$, $x\in\mathbb{R}^n$, $u\in P$,
    $v\in Q$ the following inequality holds:
     $$\|f(t,x,u,v)\|\leq \Lambda_f(1+\|x\|). $$
\end{list}
Often the Isaacs condition is put: for any $t\in [t_0,\vartheta_0]$, $x\in\mathbb{R}^n$,
$s\in \mathbb{R}^n$ the equality $$\min_{u\in P}\max_{v\in Q}\langle
s,f(t,x,u,v)\rangle=\max_{v\in Q}\min_{u\in P}\langle s,f(t,x,u,v)\rangle $$ is valid.

The function $\sigma:\mathbb{R}^n\rightarrow \mathbb{R}$ satisfies the following assumption (see \cite{Subb_book}, \cite{Bardi}):
\begin{list}{$\Sigma$\arabic{tmp1}.}{\usecounter{tmp1}}
\item $\sigma$ is locally lipshitzian;
\item there exists $\Lambda_\sigma$ such that $$|\sigma(x)|\leq \Lambda_\sigma(1+\|x\|). $$
\end{list}

Assumption $\Sigma$1 grantees the locally lipschitzness of value function. Assumption $\Sigma$1 is often replaced by the condition of continuity of $\sigma$. Assumption $\Sigma$2 was used by A.I.~Subbotin in his theory of minimax solution. It is not traditional for other approaches.

We consider three types of control design \cite{NN_PDG_en}.
\begin{enumerate}
  \item Player $U$ chooses the control in the class of counter-stratagies, and the player $V$ chooses the control in the class of feedback strategies.
  \item Player $U$ chooses the control in the class of  feedback strategies, and the player $V$ chooses the control in the class of counter-stratagies.
  \item Isaacs condition is valid and players $U$ and $V$ choose the controls in the classes of feedback strategies.
\end{enumerate}

N.N.~Krasovskii and A.I.~Subbotin proved that value functions are well-defined in these three cases. Let us denote the value function in the first case by $Val^{f}(\cdot,\cdot,P,Q,f,\sigma)$, in the second case by $Val^{s}(\cdot,\cdot,P,Q,f,\sigma)$, in the third case by $Val(\cdot,\cdot,P,Q,f,\sigma)$. It is well-known that value functions are locally lipshitzian under assumption F1--F3, $\Sigma$1, $\Sigma$2 \cite{Bardi}.

A.I. Subbotin proved that the value of differential game satisfies the boundary condition
\begin{equation}\label{boundar_cond}
    \varphi(\vartheta_0,x)=\sigma(x)
\end{equation} and the equation
\begin{equation}\label{HJ}
    \frac{\partial\varphi(t,x)}{\partial t}+H(t,x,\nabla\varphi(t,x))=0
\end{equation} in generalized sense.
Here $\nabla\varphi(t,x)$ means the vector of partial derivatives of $\varphi$ with respect to space variables.

$H$ is called Hamiltonian of differential game. It is defined in the following way.
\begin{itemize}
\item In the first case $H$ is given by
$$H(t,x,s)=H^{(-)}(t,x,s)\triangleq\max_{v\in Q}\min_{u\in P}\langle s,f(t,x,u,v)\rangle. $$
\item In the second case $H$ is given by $$H(t,x,s)=H^{(+)}(t,x,s)\triangleq \min_{u\in P}\max_{v\in Q}\langle s,f(t,x,u,v)\rangle. $$
\item If Isaacs condition is valid, then $H^{(-)}=H^{(+)}$. Therefore, $H=H^{(-)}=H^{(+)}$ in this case.
\end{itemize}

A.I.Subbotin introduced several definitions of generalized (minimax) solution of Hamilton-Jacobi equation \cite{Subb_book}. He proved that they are equivalent. Also A.I.~Subbotin proved that notion of minimax solution coincides with the notion of viscosity solution (see \cite{Subb_book} and \cite{Bardi}). We use one of equivalent definitions of minimax solution. Function
$\varphi(t,x)$ is called minimax solution of Hamilton-Jacobi equation (\ref{HJ}),
if for every $(t,x)\in (t_0,\vartheta_0)\times \mathbb{R}^n$ the following conditions is fulfilled:
\begin{equation}\label{U4}
    a+H(t,x,s)\leq 0\probel\probel \forall (a,s)\in \SDdn{\varphi}{t,x};
\end{equation}
\begin{equation}\label{L4}
    a+H(t,x,s)\geq 0\probel\probel \forall (a,s)\in \SDup{\varphi}{t,x};
\end{equation}

Here we use the notions of nonsmooth analysis \cite{DemRub}. Sets $\SDdn{\varphi}{t,x}$ and $\SDup{\varphi}{t,x}$ are called Dini subdifferential and Dini superdifferential respectively. They are defined by following rules. \begin{multline*}
\SDdn{\varphi}{t,x}\triangleq\Bigl\{(a,s)\in\mathbb{R}\times\mathbb{R}^n:\\ a\tau+\langle s, g\rangle \leq \liminf_{\alpha\rightarrow 0} \frac{\varphi(t+\alpha\tau , x+\alpha g)-\varphi(t,x)}{\alpha}\ \ \forall(\tau,g)\in\mathbb{R}\times\mathbb{R}^n \Bigr \},
\end{multline*}
\begin{multline*}
\SDup{\varphi}{t,x}\triangleq\Bigl\{(a,s)\in\mathbb{R}\times\mathbb{R}^n:\\ a\tau+\langle s, g\rangle \geq \limsup_{\alpha\rightarrow 0} \frac{\varphi(t+\alpha\tau , x+\alpha g)-\varphi(t,x)}{\alpha} \ \ \forall(\tau,g)\in\mathbb{R}\times\mathbb{R}^n \Bigr\}.
\end{multline*}

The function $\varphi$ is locally lipshitzian, since $\sigma$ is locally lipshitzian \cite{Bardi}. There exists a differentiability set of $\varphi$, denote it by $J$. We have $J\subset\iro$. By the Rademacher's theorem \cite{Evans_measure} measure $(\ir)\setminus J$ is 0, therefore the closure of $J$ is equal to $\ir$. For $(t,x)\in
J$ full derivative of $\varphi$ is $(\partial \varphi(t,x)/\partial t, \nabla\varphi(t,x))$. If $\SDup{\varphi}{t,x}$ and $\SDdn{\varphi}{t,x}$ are nonempty simultaneously, then $\varphi$ is differentiable at $(t,x)$, and $\SDup{\varphi}{t,x}=\SDdn{\varphi}{t,x}=\{(\partial\varphi(t,x)/t,\nabla\varphi(t,x))\}$ \cite{DemRub}.

If $\varphi$ is differentiable at position  $(t,x)$, then equality (\ref{HJ}) is valid at the position $(t,x)$ in the ordinary sense.

Let $(t,x)\in \ir$. Consider the set
\begin{equation*}\label{A_def}
\mathcal{A}\varphi(t,x)\triangleq\left\{(a,s):\exists \{(t_i,x_i)\}_{i=1}^\infty \subset J: \ \ a=\lim_{i\rightarrow\infty}\frac{\partial\varphi(t_i,x_i)}{\partial t},\ \ s=\lim_{i\rightarrow\infty}\nabla\varphi(t_i,x_i) \right\}.
\end{equation*}

Since $\varphi$ is locally lipshitzian, the set $\mathcal{A}\varphi(t,x)$ is equal to Clarke subdifferential at the position $(t,x)$ \cite{DemRub}.
Therefore, we have \cite{DemRub}
\begin{equation}\label{sub_dif_incl}
    \SDdn{\varphi}{t,x},\SDup{\varphi}{t,x}\subset\mathcal{A}\varphi(t,x).
\end{equation}

Let us describe the properties of Hamiltonian.

First, let us introduce a class of real-valued function. This class will be used extensively throughout this  paper. Denote by $\Omega$  the set of all even semiadditive  functions $\omega:\mathbb{R}\rightarrow [0,+\infty)$ such that $\omega(\delta)\rightarrow 0$, $\delta\rightarrow 0$.

If $H=H^{(-)}$ or $H=H^{(+)}$ then the following conditions are valid with $\Upsilon=\Lambda_f$ (see \cite{Subb_book}):
\begin{list}{H\arabic{tmp2}.}{\usecounter{tmp2}}
\item (sublinear growth condition) for all  $(t,x,s)\in \mathbb{R}^n$
$$|H(t,x,s)|\leq \Upsilon\|s\|(1+\|x\|); $$
\item
for every bounded region $A\subset\mathbb{R}^n$ there exist function $\omega_A\in \Omega$ and constant $L_A$ such that for all
$(t',x',s'),(t'',x'',s'')\in [t_0,\vartheta_0]\times A\times \mathbb{R}^n$, $\|s'\|,\|s''\|\leq R$ the following inequality holds:
\begin{multline*}
\|
H(t',x',s')-H(t'',x'',s'')\|\leq\\ \leq
\omega_A(t'-t'')+L_AR\|x'-x''\|+\Upsilon(1+\inf\{\|x'\|,\|x''\|\})\|s_1-s_2\|;
\end{multline*}
\item $H$ is positively homogeneous with respect to the third variable: $$H(t,x,\alpha s)=\alpha H(t,x,s)\probel \forall
(t,x)\in \ir,\probel\forall s\in \mathbb{R}^n\probel \forall \alpha\in [0,\infty).
 $$
\end{list}

\section{Main Result}\label{sect_main}
In this section we study the class of functions which may be a values of differential game. The main result is formulated below.

Denote by $\mathrm{COMP}$ the set of all finite-dimensional compacts. Let $P,Q\in\mathrm{COMP}$, denote by $\mathrm{DYN}(P,Q)$ the set of all functions
$f:[t_0,\vartheta_0]\times\mathbb{R}^n\times P\times Q\rightarrow \mathbb{R}^n$
satisfying the conditions F1--F3. Denote by  $\mathrm{DYNI}(P,Q)$ the set of all functions
$f:[t_0,\vartheta_0]\times\mathbb{R}^n\times P\times Q\rightarrow \mathbb{R}^n$
satisfying Isaacs condition and conditions F1--F3.
The set of functions $\sigma:\mathbb{R}^n\rightarrow\mathbb{R}$ satisfying condition $\Sigma$1
and $\Sigma$2 is denoted by $\mathrm{TP}$.

The set of values of differential games may be described in the following way.

\begin{itemize}\item Set of values of differential games considered in the class counter-strategy/strategy is
\begin{multline*}
\mathrm{VALF}=\{\varphi:[t_0,\vartheta_0]\times
\mathbb{R}^n\rightarrow\mathbb{R}:\\ \exists P,Q\in\mathrm{COMP}\exists f\in
\mathrm{DYN}(P,Q)\exists \sigma\in \mathrm{TP}:\probel
\varphi=Val^f(\cdot,\cdot,P,Q,f,\sigma)\}.
\end{multline*}
\item Set of values of differential games considered in the class strategy/counter-strategy is
\begin{multline*}
\mathrm{VALS}=\{\varphi:[t_0,\vartheta_0]\times
\mathbb{R}^n\rightarrow\mathbb{R}:\\ \exists P,Q\in\mathrm{COMP}\exists f\in
\mathrm{DYN}(P,Q)\exists \sigma\in \mathrm{TP}:\probel
\varphi=Val^s(\cdot,\cdot,P,Q,f,\sigma)\}.
\end{multline*}
\item Set of values of differential games considered in the class of feedback strategies is
\begin{multline*}
\mathrm{VALI}=\{\varphi:[t_0,\vartheta_0]\times
\mathbb{R}^n\rightarrow\mathbb{R}:\\ \exists P,Q\in\mathrm{COMP}\exists f\in
\mathrm{DYNI}(P,Q)\exists \sigma\in \mathrm{TP}:\probel
\varphi=Val(\cdot,\cdot,P,Q,f,\sigma)\}.
\end{multline*}
\end{itemize}

Denote by  $\mathrm{Lip}_B$ the set of all locally lipschitzian functions $\varphi:\ir\rightarrow \mathbb{R}$ such that $\varphi(\vartheta_0,\cdot)$ satisfies sublinear growth condition. The sets $\mathrm{VALF}$, $\mathrm{VALS}$,
$\mathrm{VALI}$ are subset of the set $\mathrm{Lip}_B$. Also, $\mathrm{VALI}\subset \mathrm{VALF}$ and $\mathrm{VALI}\subset \mathrm{VALS}$.

Let $\varphi\in \mathrm{Lip}_B$. Denote the differentiability set of $\varphi$ by $J$.
  For
$(t,x)\in J$ set $$E_1(t,x)\triangleq \{\nabla \varphi(t,x)\};$$
\begin{equation}\label{h_def_nabla} h(t,x,\nabla\varphi(t,x))\triangleq
-\frac{\partial\varphi(t,x)}{\partial t}.
\end{equation}

Put the following condition.

\begin{list}{(E\arabic{tmp4})}{\usecounter{tmp4}}\item For any position $(t_*,x_*)\notin J$, and any sequences  $\{(t'_i,x'_i)\}_{i=1}^\infty,\ \ \{(t''_i,x''_i)\}_{i=1}^\infty\ \ \subset J$ such that $(t'_i,x'_i)\rightarrow (t_*,x_*)$,
$i\rightarrow\infty$, $(t''_i,x''_i)\rightarrow (t_*,x_*)$, $i\rightarrow\infty$, the following implication holds:
\begin{multline*}
(\lim_{i\rightarrow\infty}\nabla\varphi(t'_i,x'_i)=\lim_{i\rightarrow\infty}\nabla\varphi(t''_i,x''_i))\Rightarrow\\
(\lim_{i\rightarrow\infty}h(t'_i,x'_i,\nabla\varphi(t'_i,x'_i))=\lim_{i\rightarrow\infty}h(t''_i,x''_i,\nabla\varphi(t''_i,x''_i))).
\end{multline*}\end{list}

Let $(t,x)\in \ir\setminus J$, denote $$E_1(t,x)=\{s\in\mathbb{R}^n:\exists
\{(t_i,x_i)\}\subset J: \lim_{i\rightarrow \infty}(t_i,x_i)=(t,x)\probel \&\probel
\lim_{i\rightarrow\infty}\nabla\varphi(t_i,x_i)=s\}.
$$ Since $\varphi$ is locally lipschitzian, the set $E_1(t,x)$
is nonempty and bounded for every $(t,x)\in \ir\setminus J$.

If $(t,x)\in \ir\setminus J$ and $s\in E_1(t,x)$, then assumption (E1) yield that the following value is well defined:
\begin{multline}\label{h_def_lim}
h(t,x,s)\triangleq\lim_{i\rightarrow\infty}h(t_i,x_i,\nabla\varphi(t_i,x_i))\\
\forall \{(t_i,x_i)\}_{i=1}^\infty\subset J:\lim_{i\rightarrow\infty}(t_i,x_i)=(t,x)\probel\&\probel
s=\lim_{i\rightarrow\infty}\nabla\varphi(t_i,x_i).
\end{multline}

Condition (E1) is the condition of extendability $h$ from the set $\mathbb{E}_0\triangleq\{(t,x,\nabla\varphi(t,x)):(t,x)\in J\}$ to $\overline{\mathbb{E}_0}\cap\{(t,x,s):(t,x)\in\ir\setminus J\}$. Thus, function $h$ is defined on the basis of Clarke subdifferential of $\varphi$ at $(t,x)\in\iro\setminus J$.
Indeed, Clarke subdifferential of $\varphi$ at $(t,x)\notin J$ is equal to
\begin{equation}\label{cl_repr_s1}
\mathcal{A}{\varphi}(t,x)={\rm co}\{(-h(t,x,s),s):s\in E_1(t,x)\}.
\end{equation}

Recall that for any $(t,x)\in \iro$
\begin{equation}\label{incl_dini_clark}
    \SDdn{\varphi}{t,x},\SDup{\varphi}{t,x}\subset \mathcal{A}{\varphi}(t,x).
\end{equation}

Denote $$CJ^-\triangleq\{(t,x)\in \iro\setminus
J:\SDdn{\varphi}{(t,x)}\neq\varnothing\};$$
$$CJ^+\triangleq\{(t,x)\in \iro\setminus J:\SDup{\varphi}{(t,x)}\neq\varnothing\}.$$ Notice that $CJ^-\cap
CJ^+=\varnothing$.

Define 
a set $E_2(t,x)$ for $(t,x)\in CJ^-$ by the rule:
$$E_2(t,x)\triangleq \{s\in\mathbb{R}^n:\exists a\in\mathbb{R}: (a,s)\in
\SDdn{\varphi}{(t,x)}\}\setminus E_1(t,x).$$ If $(t,x)\in CJ^+$ set
$$E_2(t,x)\triangleq \{s\in\mathbb{R}^n:\exists a\in\mathbb{R}: (a,s)\in
\SDup{\varphi}{(t,x)}\}\setminus E_1(t,x).$$ If $(t,x)\in(\ir)\setminus (CJ^-\cup
CJ^+)$ set $$E_2(t,x)\triangleq\varnothing. $$

The set $E_2(t,x)$ is complement of $E_1(t,x)$ with respect to projection of Dini subdifferential (or superdifferential) at $(t,x)$.

For $(t,x)\in \ir$ define $$E(t,x)\triangleq E_1(t,x)\cup E_2(t,x).$$
$E(t,x)\neq \varnothing$ for any $(t,x)\in \ir$.

Let us introduce the following notations. If $i=1,2$, then $$\mathbb{E}_i\triangleq \{(t,x,s):(t,x)\in \ir, \ \ s\in E_1(t,x)\}.$$ Denote
$$\mathbb{E}\triangleq \{(t,x,s):(t,x)\in \ir, \ \ s\in E(t,x)\};$$
$$\mathbb{E}^\natural\triangleq \{(t,x,s):(t,x)\in \ir, \ \ s\in E^\natural(t,x)\}. $$ Note that $\mathbb{E}^\natural\subset\ir\times S^{(n-1)}$.
Here $S^{(n-1)}$ means $(n-1)$-dimensional sphere $$S^{(n-1)}\triangleq\{s\in\mathbb{R}^n:\|s\|=1\}.$$
Also, $\mathbb{E}=\mathbb{E}_1\cup \mathbb{E}_2$.

Note, that the function $h$ is defined on $\mathbb{E}_1$.  The truth of inclusion $\varphi\in\mathrm{VALF}$ depends on the existence of this extension of $h$ to $\mathbb{E}$.

\begin{Th}\label{Th_main}Function $\varphi\in \mathrm{Lip}_B$ belongs to the set
$\mathrm{VALF}$ if and only if the condition {\rm (E1)} holds and  the function $h$ defined on $\mathbb{E}_1$
by formulas (\ref{h_def_nabla}) and (\ref{h_def_lim}) is extendable to the set $\mathbb{E}$
such that conditions {\rm (E2)--(E4)} are valid. (Conditions {\rm (E2)--(E4)} are defined below.)
\end{Th}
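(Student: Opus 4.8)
The plan is to prove both implications by reducing membership in $\mathrm{VALF}$ to the existence of a suitable Hamiltonian. Recall that $\varphi=Val^f(\cdot,\cdot,P,Q,f,\sigma)$ precisely when $\varphi$ is the minimax solution of the Hamilton–Jacobi equation (\ref{HJ}) with $H=H^{(-)}$ and boundary data $\varphi(\vartheta_0,\cdot)=\sigma$; that is, when $\varphi$ satisfies the Dini inequalities (\ref{U4}) and (\ref{L4}) on the interior and matches $\sigma$ at $\vartheta_0$. Since the minimax solution is unique for given $H$ and $\sigma$, it suffices throughout to exhibit an admissible lower Hamiltonian making $\varphi$ a minimax solution; I never have to identify $\varphi$ as a value directly.

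\emph{Necessity.} Assume $\varphi=Val^f(\cdots)$, so $\varphi$ solves (\ref{HJ}) with $H=H^{(-)}$ for some $f\in\mathrm{DYN}(P,Q)$, and $H$ enjoys properties H1--H3. At every differentiability point $(t,x)\in J$ the equation (\ref{HJ}) holds classically, hence $H(t,x,\nabla\varphi(t,x))=-\partial\varphi(t,x)/\partial t=h(t,x,\nabla\varphi(t,x))$ by (\ref{h_def_nabla}); thus $h=H$ on $\mathbb{E}_1$. Condition (E1) is then immediate from the continuity of $H$ (property H2): if two sequences in $J$ converge to $(t_*,x_*)$ with a common gradient limit $s$, then both $h$-limits equal $H(t_*,x_*,s)$ and so coincide. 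I next set $h(t,x,s)\triangleq H(t,x,s)$ for all $s\in E(t,x)$, which extends $h$ from $\mathbb{E}_1$ to $\mathbb{E}$; the required properties (E2)--(E4) of this extension I read off from H1--H3 together with the Dini conditions (\ref{U4})--(\ref{L4}), invoking the inclusion (\ref{incl_dini_clark}) to control the contribution of the $E_2$-directions.

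\emph{Sufficiency.} Conversely, assume (E1) holds and that $h$ admits an extension to $\mathbb{E}$ satisfying (E2)--(E4). I take $\sigma\triangleq\varphi(\vartheta_0,\cdot)$, which lies in $\mathrm{TP}$ because $\varphi\in\mathrm{Lip}_B$. The core step is to build a globally defined Hamiltonian $H$ on $\ir\times\mathbb{R}^n$ that agrees with the extended $h$ on $\mathbb{E}$ and satisfies H1--H3: positive homogeneity H3 fixes $H$ along rays, while conditions (E2)--(E4) are designed to guarantee that the resulting $H$ inherits the growth bound H1 and the modulus-of-continuity estimate H2. Once $H$ is in hand, I appeal to Subbotin's representation of such Hamiltonians as a lower game Hamiltonian, producing compacta $P,Q$ and $f\in\mathrm{DYN}(P,Q)$ with $H(t,x,s)=\max_{v}\min_{u}\langle s,f(t,x,u,v)\rangle=H^{(-)}(t,x,s)$. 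It then remains to verify (\ref{U4}) and (\ref{L4}) for this $H$: at points of $J$ this is the classical equation, while at $(t,x)\notin J$ I use that every $s$ occurring in $\SDdn{\varphi}{t,x}$ or $\SDup{\varphi}{t,x}$ lies in $E(t,x)=E_1(t,x)\cup E_2(t,x)$ (via (\ref{cl_repr_s1}) and (\ref{incl_dini_clark})), so that $H(t,x,s)=h(t,x,s)$ there and the sign conditions reduce to the inequalities packaged in (E2)--(E4). By uniqueness of the minimax solution, $\varphi=Val^f(\cdot,\cdot,P,Q,f,\sigma)\in\mathrm{VALF}$.

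The main obstacle is the sufficiency construction, and within it two linked difficulties. First, the extension data is prescribed only on the thin, generally non-conical set $\mathbb{E}$, yet I must produce a Hamiltonian defined for all $s$ that simultaneously respects H1--H3; reconciling the pointwise values of $h$ on $E(t,x)$ with the global Lipschitz-type estimate H2 across varying $(t,x)$ is the delicate analytic point, and is exactly where (E2)--(E4) must be strong enough. Second, verifying the Dini inequalities at non-differentiability points requires that the $E_2$-directions, which arise from Dini sub/super-differentials rather than from gradients, carry the correct one-sided sign relative to $-a$; this is the role of the $CJ^-$/$CJ^+$ dichotomy, and I expect the careful bookkeeping between the convex-hull description (\ref{cl_repr_s1}) of the Clarke subdifferential and the finer Dini sets to be the crux of the argument.
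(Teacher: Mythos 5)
Your outline follows the same route as the paper: necessity by setting $h=H^{(-)}$ on $\mathbb{E}$ and reading (E1)--(E4) off H1--H3 together with the minimax inequalities; sufficiency by extending $h$ to a global Hamiltonian satisfying H1--H3, representing that Hamiltonian as $\max_v\min_u\langle s,f\rangle$, and then checking (\ref{U4})--(\ref{L4}) via the Clarke-subdifferential representation (\ref{cl_repr_s1}) and condition (E2). The necessity half and the final verification of the Dini inequalities are essentially the paper's argument.

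However, in the sufficiency direction you have left the two load-bearing steps as assertions, and neither can be discharged the way you suggest. First, the extension of $h$ from $\mathbb{E}$ to $\ir\times\mathbb{R}^n$ is not a routine consequence of ``(E2)--(E4) being designed for it'': the estimate in (E4) holds only with constants $L_A$ and moduli $\omega_A$ depending on the bounded set $A$, so a single application of McShane's extension theorem (which needs one global Lipschitz constant) does not produce a function satisfying H2, and homogeneous extension along rays does nothing to define $H$ at directions $s$ not represented in any $E(t,x)$. The paper devotes Section 5 to this: it exhausts $\mathbb{R}^n$ by unit cubes $\Pi(e_k,1)$, performs a McShane-type sup-convolution extension cube by cube with the local constants $L_k$, $\omega_k$, and the growth bound $\Gamma(1+\|x\|)$ built into the formula (\ref{h_k_definition}), and verifies by induction that each $h_k$ still satisfies the local estimates (G5)--(G6); only then does it homogenize to get $H$. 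Second, you appeal to a ready-made ``representation of such Hamiltonians as a lower game Hamiltonian,'' but the standard Evans--Souganidis result covers Hamiltonians that are bounded and uniformly Lipschitz on $\ir\times S^{(n-1)}$, whereas here $H$ has sublinear growth $\Gamma\|s\|(1+\|x\|)$ and only locally Lipschitz dependence on $x$; the paper must prove its own version (Lemma \ref{lm_Evans}), building $f$ explicitly with $P=Q=B\times B$ and the $\Upsilon(1+\|x\|)$ factors inserted to absorb the growth. Without these two constructions the sufficiency half is a plan, not a proof.
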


\begin{list}{(E\arabic{tmp4})}{\usecounter{tmp4}}\setcounter{tmp4}{1}
\item\label{E_solution}
\begin{itemize}\item If $(t,x)\in CJ^-$ then for any $s_1,\ldots s_{n+2}\in E_1(t,s)$
$\lambda_1,\ldots, \lambda_{n+2}\in[0,1]$ such that $\sum \lambda_k=1$, $(-\sum\lambda_k
h(t,x,s_k),\sum \lambda_k s_k)\in D^-\varphi(t,x)$ the following inequality holds: $$h \left(t,x,\sum_{k=1}^{n+2}
\lambda_k s_k\right)\leq \sum_{k=1}^{n+2}\lambda_k h(t,x,s_k).
$$
\item If $(t,x)\in CJ^+$ then for any $s_1,\ldots s_{n+2}\in E_1(t,s)$
$\lambda_1,\ldots, \lambda_{n+2}\in[0,1]$ such that $\sum \lambda_k=1$, $(-\sum\lambda_k
h(t,x,s_k),\sum \lambda_k s_k)\in D^+\varphi(t,x)$ the following inequality holds: $$h\left(t,x,\sum_{k=1}^{n+2}
\lambda_k s_k\right)\geq \sum_{k=1}^{n+2}\lambda_k h(t,x,s_k).
$$\end{itemize}
\end{list}

The condition (E2) is an analog of minimax inequalities (\ref{U4}), (\ref{L4}).

\begin{list}{(E\arabic{tmp4})}{\usecounter{tmp4}}\setcounter{tmp4}{2}
\item\label{E_gomogenious}
For all $(t,x)\in \ir$:
\begin{itemize}
    \item if $0\in E(t,x)$, then $h(t,x,0)=0$;
    \item if $s_1\in E(t,x)$ and $s_2\in E(t,x)$ are codirectional (i.e. $\langle s_1,s_2\rangle=\|s_1\|\cdot \|s_2\|$), then  $$\|s_2\|h(t,x,s_1)=\|s_1\|h(t,x,s_2). $$
  \end{itemize}
\end{list}

This condition means that function $h$ is positively homogeneous with respect to $s$.

Let us introduce the function $h^\natural(t,x,s):\mathbb{E}^\natural\rightarrow\mathbb{R}$. Put $\forall (t,x)\in
\ir\probel\forall s\in E(t,x)\setminus\{0\}$
\begin{equation}\label{h_1_def}
h^\natural(t,x,\|s\|^{-1}s)\triangleq \|s\|^{-1}h(t,x,s).
\end{equation} Under condition (E3) the function $h^\natural$ is well defined.

\begin{list}{(E\arabic{tmp4})}{\usecounter{tmp4}}\setcounter{tmp4}{3}
\item\label{E_h_bicar}
\begin{itemize}
\item Function $h^\natural$ satisfies the sublinear growth condition: there exists $\Gamma>0$ such that for any $(t,x,s)\in \mathbb{E}^\natural$ the following inequality is fulfilled $$h^\natural(t,x,s)\leq \Gamma(1+\|x\|).$$
\item For every bounded region $A\subset \mathbb{R}^n$ there exist $L_A>0$ and function
$\omega_A\in \Omega$ such that for any $(t',x',s'), (t'',x'',s'')\in
\mathbb{E}^\natural\cap [t_0,\vartheta_0]\times A\times \mathbb{R}^n$ the following inequality is fulfilled
 $$\|h^\natural(t',x',s')-h^\natural(t'',x'',s'')\|\leq \omega_A(t'-t'')+L_A\|x'-x''\|+\Gamma (1+\inf\{\|x'\|,\|x''\|\})\|s'-s''\|.$$
 \end{itemize}\end{list}

Condition (E4) is a restriction of conditions H1 and H2 on the set $\mathbb{E}$.

The proof of the main theorem is given in section 7. The proof uses lemmas formulated in sections 5 and 6. Let us introduce a method of extension of function $h$ from $\mathbb{E}_1$ to the set $\mathbb{E}$.

\begin{coll}\label{coll_suff}Let $\varphi\in \mathrm{Lip}_B$. Suppose that $h$
defined on $\mathbb{E}_1$ by formulas (\ref{h_def_nabla}) and (\ref{h_def_lim})
satisfies the condition {\rm (E1)}. Suppose also that the extension of $h$ on $\mathbb{E}_2$ given by the following rule is well defined:
 $\forall (t,x)\in CJ^-\cup CJ^+$, $s\in {E}_2(t,x)$
\begin{equation}\label{h_2_spec_case}
    h(t,x,s)\triangleq\sum_{i=1}^{n+2}\lambda_i h(t,x,s_i)
\end{equation} for any $s_1,\ldots,s_{n+2}\in {E}_1(t,x)$, $\lambda_1,\ldots,
\lambda_{n+2}$ such that $\sum \lambda_i=1$ $\sum\lambda_i s_i=s$. If function $h:\mathbb{E}\rightarrow\mathbb{R}$ satisfies the conditions {\rm (E3)} and {\rm (E4)}, then $\varphi\in \mathrm{VALF}$.
\end{coll}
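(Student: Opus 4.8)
The plan is to deduce the corollary directly from Theorem~\ref{Th_main}. Since the hypotheses already grant condition (E1), the well-definedness of the extension of $h$ from $\mathbb{E}_1$ to $\mathbb{E}_2$ by formula (\ref{h_2_spec_case}), and conditions (E3) and (E4), the function $h$ is defined and single-valued on all of $\mathbb{E}=\mathbb{E}_1\cup\mathbb{E}_2$. Thus the only thing left to check before invoking the theorem is that this concrete extension also satisfies (E2); once (E2) is in hand, Theorem~\ref{Th_main} immediately gives $\varphi\in\mathrm{VALF}$.

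First I would record the key feature of the extension (\ref{h_2_spec_case}): its well-definedness means precisely that, for $(t,x)\in CJ^-\cup CJ^+$ and $s\in E_2(t,x)$, the number $\sum_i\lambda_i h(t,x,s_i)$ does not depend on the chosen representation $s=\sum_i\lambda_i s_i$ with $s_i\in E_1(t,x)$, $\sum_i\lambda_i=1$; that is, $h(t,x,\cdot)$ is affine along $E_2(t,x)$ in the sense that $h(t,x,s)=\sum_i\lambda_i h(t,x,s_i)$ for every such representation. Now fix $(t,x)\in CJ^-$ and a Carath\'eodory representation $s:=\sum_{k=1}^{n+2}\lambda_k s_k$ with $s_k\in E_1(t,x)$ and $(-\sum_k\lambda_k h(t,x,s_k),s)\in D^-\varphi(t,x)$, as in (E2). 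Because this point lies in $D^-\varphi(t,x)$, the vector $s$ belongs to the projection of $D^-\varphi(t,x)$, hence $s\in E_1(t,x)\cup E_2(t,x)$. If $s\in E_2(t,x)$, then by the affineness just displayed $h(t,x,s)=\sum_k\lambda_k h(t,x,s_k)$, so the inequality required in (E2) holds with equality. The case $(t,x)\in CJ^+$ with $s\in E_2(t,x)$ is symmetric, using (\ref{h_2_spec_case}) together with $D^+\varphi(t,x)$.

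The one remaining case, $s=\sum_k\lambda_k s_k\in E_1(t,x)$, is the main obstacle, because there $h(t,x,s)$ is prescribed by the limit formula (\ref{h_def_lim}) rather than by the affine rule, so the inequality is no longer definitional. Here I would compare the two points of $\mathcal{A}\varphi(t,x)$ sharing the projection $s$: the generator $(-h(t,x,s),s)$, arising from a sequence $(t_i,x_i)\in J$ with $\nabla\varphi(t_i,x_i)\to s$, and the Dini point $(-\sum_k\lambda_k h(t,x,s_k),s)\in D^-\varphi(t,x)\subset\mathcal{A}\varphi(t,x)$. The inequality of (E2) is exactly the assertion that, over the fibre above $s$, the generator is not below the Dini point in the time-coordinate (and not above it in the $CJ^+$ case). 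I would obtain this by combining the lower-support characterization of $D^-\varphi(t,x)$ with the differentiability-point sequence defining $s\in E_1(t,x)$, and, to pass to the limit cleanly, with the continuity furnished by (E4): approximating the direction $s$ by directions in $E_2(t,x)$ on which equality has already been established and letting them tend to $s$. Verifying this limiting comparison — namely that the limit-defined value $h(t,x,s)$ is consistent with the affine extension at $E_1$-directions that are non-extreme in $\mathrm{co}\,E_1(t,x)$ — is the delicate step; everything else reduces to bookkeeping and a direct appeal to Theorem~\ref{Th_main}.
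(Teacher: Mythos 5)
Your overall route is exactly the paper's: reduce everything to verifying (E2) and then invoke Theorem~\ref{Th_main}, since (E1), (E3), (E4) and the single-valuedness of $h$ on $\mathbb{E}$ are hypotheses. Your treatment of the case $\sum_k\lambda_k s_k\in E_2(t,x)$ — the well-definedness of (\ref{h_2_spec_case}) forces $h(t,x,\sum_k\lambda_k s_k)=\sum_k\lambda_k h(t,x,s_k)$ for \emph{every} admissible representation, so the inequality in (E2) holds with equality — is precisely the paper's entire argument (the paper even adds a superfluous claim that any such convex combination lies in $D^-\varphi(t,x)$, which is not needed for this direction; your version is cleaner).

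The one place you diverge is the case $\sum_k\lambda_k s_k\in E_1(t,x)$, which you correctly observe is not covered by the affine rule, since there $h$ is prescribed by the limit formula (\ref{h_def_lim}). Two remarks. First, the paper's own proof does not treat this case at all — it passes directly from the $E_2$ computation to ``the first part of condition (E2) is fulfilled'' — so you have not missed an argument that the paper supplies; you have flagged a point the paper leaves implicit. Second, your proposed resolution would not work as written: you want to approximate such an $s\in E_1(t,x)$ by directions in $E_2(t,x)$ on which equality is already known and pass to the limit via the Lipschitz estimate in (E4), but there is no reason such an $s$ is a limit point of $E_2(t,x)$ (the set $E_2(t,x)$ may even be empty while nontrivial convex combinations of $E_1(t,x)$ land back in $E_1(t,x)$), and (E4) controls $h^\natural$ only on $\mathbb{E}^\natural$, so it cannot by itself relate the limit-defined value at $s$ to the affine combination $\sum_k\lambda_k h(t,x,s_k)$. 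If you want to close this case you need a direct comparison between the Dini point $(-\sum_k\lambda_k h(t,x,s_k),s)\in\SDdn{\varphi}{t,x}$ and the gradient-limit point $(-h(t,x,s),s)\in\mathcal{A}\varphi(t,x)$ over the same fibre, which is a statement about $\varphi$ (and condition (E1)), not about the extension rule; as the corollary is stated and proved in the paper, this comparison is simply asserted. So: same approach, correct on the step the paper actually writes down, and an honestly acknowledged but unrepaired subtlety on a step the paper omits.
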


The following corollaries is devoted to the relations between sets $\mathrm{VALF}$, $\mathrm{VALS}$ and $\mathrm{VALI}$. \begin{coll}\label{coll_eq}\label{Th_eq}Sets $\mathrm{VALF}$ and $\mathrm{VALS}$ coincide.
\end{coll}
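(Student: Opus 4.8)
The plan is to derive $\mathrm{VALF}=\mathrm{VALS}$ from two auxiliary facts about the reflection $\varphi\mapsto-\varphi$: first, that it carries $\mathrm{VALF}$ onto $\mathrm{VALS}$, and second, that $\mathrm{VALF}$ is itself invariant under it. Writing these as (A) $\varphi\in\mathrm{VALF}\Leftrightarrow-\varphi\in\mathrm{VALS}$ and (B) $\varphi\in\mathrm{VALF}\Leftrightarrow-\varphi\in\mathrm{VALF}$, the conclusion follows at once: for $\varphi\in\mathrm{Lip}_B$, applying (A) to $-\varphi$ gives $\varphi\in\mathrm{VALS}\Leftrightarrow-\varphi\in\mathrm{VALF}$, and (B) turns the right-hand side into $\varphi\in\mathrm{VALF}$.

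To prove (A), I would start from $\varphi=Val^f(\cdot,\cdot,P,Q,f,\sigma)$, so that $\varphi$ is the minimax solution of (\ref{HJ}), (\ref{boundar_cond}) with $H=H^{(-)}$. Setting $\psi\triangleq-\varphi$ and using $D^-\psi(t,x)=-D^+\varphi(t,x)$, $D^+\psi(t,x)=-D^-\varphi(t,x)$, a substitution into (\ref{U4}), (\ref{L4}) shows that $\psi$ is the minimax solution for the Hamiltonian $\hat H(t,x,s)\triangleq-H^{(-)}(t,x,-s)$ with boundary function $-\sigma\in\mathrm{TP}$. Since $-\max_{v}\min_{u}\langle-s,f\rangle=\min_{v}\max_{u}\langle s,f\rangle$, one has $\hat H=H^{(+)}$ for the swapped dynamics $g(t,x,u',v')\triangleq f(t,x,v',u')$ with $u'\in Q$, $v'\in P$, and $g\in\mathrm{DYN}(Q,P)$. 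By uniqueness of the minimax solution this identifies $\psi=Val^s(\cdot,\cdot,Q,P,g,-\sigma)$, whence $-\varphi\in\mathrm{VALS}$; the reverse inclusion is symmetric.

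For (B) I would invoke the main theorem. The function $-\varphi$ has the same differentiability set $J$ with $\nabla(-\varphi)=-\nabla\varphi$ and $\partial_t(-\varphi)=-\partial_t\varphi$, so $E_1^{-\varphi}(t,x)=-E_1^{\varphi}(t,x)$ and the associated function is $h^{-\varphi}(t,x,s)=-h^{\varphi}(t,x,-s)$; thus (E1) holds for $-\varphi$ iff it holds for $\varphi$. Given an admissible extension $h^{\varphi}$ on $\mathbb{E}^{\varphi}$, I would set $h^{-\varphi}(t,x,s)\triangleq-h^{\varphi}(t,x,-s)$ on $\mathbb{E}^{-\varphi}$; because $D^{\mp}(-\varphi)=-D^{\pm}\varphi$ we get $CJ^{-}_{-\varphi}=CJ^{+}_{\varphi}$, $CJ^{+}_{-\varphi}=CJ^{-}_{\varphi}$ and $E_2^{-\varphi}=-E_2^{\varphi}$, so this is a genuine extension to $\mathbb{E}^{-\varphi}$. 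Conditions (E3) and (E4) are preserved because they are symmetric under $s\mapsto-s$ and under the global sign change of $h$. The decisive verification is (E2): its clause on $CJ^{-}_{-\varphi}=CJ^{+}_{\varphi}$, rewritten with $s_k=-s_k'$ and $h^{-\varphi}(\cdot,s_k')=-h^{\varphi}(\cdot,-s_k')$, becomes precisely the clause of (E2) for $\varphi$ on $CJ^{+}_{\varphi}$ (the membership constraint passing from $D^{-}(-\varphi)$ to $D^{+}\varphi$ and the inequality $\le$ turning into $\ge$), and symmetrically for the other clause. Hence $h^{-\varphi}$ satisfies (E2)--(E4), and the main theorem yields $-\varphi\in\mathrm{VALF}$; running the argument backwards gives the equivalence (B).

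The hard part is the sign bookkeeping in step (B): one must check that the simultaneous exchange $CJ^{-}\leftrightarrow CJ^{+}$, reflection $s\mapsto-s$, and sign change $h\mapsto-h(\cdot,-\,\cdot\,)$ map each of the two inequalities in (E2) exactly onto the other, so that (E2) is invariant rather than reversed. Once this is confirmed, steps (A) and (B) combine as in the first paragraph to give $\mathrm{VALF}=\mathrm{VALS}$.
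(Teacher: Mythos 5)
Your proof is correct, but it follows a genuinely different route from the paper's. The paper's argument is a one--step application of Lemma \ref{lm_evans_min_max}: if $\varphi=Val^f(\cdot,\cdot,P,Q,f,\sigma)$, its Hamiltonian $H=H^{(-)}$ satisfies H1--H3, so that lemma re-represents the \emph{same} $H$ as $\min_{u\in P'}\max_{v\in Q'}\langle s,f'(t,x,u,v)\rangle$ for new dynamics $f'$; since the value is the minimax solution determined by $(H,\sigma)$, the same $\varphi$ is $Val^s(\cdot,\cdot,P',Q',f',\sigma)$, and the reverse inclusion is symmetric. You instead factor the statement through the reflection $\varphi\mapsto-\varphi$, proving $\mathrm{VALS}=-\mathrm{VALF}$ by the elementary identity $-\max_v\min_u\langle -s,f\rangle=\min_v\max_u\langle s,f\rangle$ together with $D^{\mp}(-\varphi)=-D^{\pm}\varphi$ and uniqueness of the minimax solution, and then proving $-\mathrm{VALF}=\mathrm{VALF}$ by pushing the data $(E_1,E_2,CJ^{\pm},h)$ through the sign change and invoking the main theorem; your sign bookkeeping in (E2) does check out (the $CJ^-$ clause for $-\varphi$ becomes exactly the $CJ^+$ clause for $\varphi$, with the only caveat that the first bullet of (E4) must be read as a two-sided bound $|h^\natural|\leq\Gamma(1+\|x\|)$, as the paper itself does in (G5)). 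Your decomposition is heavier --- step (B) drags in the full characterization (E1)--(E4) where a direct appeal to Lemma \ref{lm_Evans} applied to $\check H(t,x,s)=-H(t,x,-s)$ would give $-\varphi\in\mathrm{VALF}$ immediately --- but it buys two symmetry facts of independent interest ($\mathrm{VALS}=-\mathrm{VALF}$ and $-\mathrm{VALF}=\mathrm{VALF}$) that the paper's proof does not make explicit, and it replaces the min--max representation lemma by the max--min one plus elementary duality.
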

\begin{coll}\label{Th_dim_1}If $n=1$, then $\mathrm{VALI}=\mathrm{VALF}=\mathrm{VALS}$.
\end{coll}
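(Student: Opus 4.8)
The plan is to deduce the triple equality from facts already in hand: $\mathrm{VALI}\subseteq\mathrm{VALF}$, $\mathrm{VALI}\subseteq\mathrm{VALS}$, and $\mathrm{VALF}=\mathrm{VALS}$ (Corollary~\ref{coll_eq}). Given these, it suffices to prove the single reverse inclusion $\mathrm{VALF}\subseteq\mathrm{VALI}$ in the scalar case $n=1$; combined with $\mathrm{VALI}\subseteq\mathrm{VALF}=\mathrm{VALS}$ this yields $\mathrm{VALI}=\mathrm{VALF}=\mathrm{VALS}$. So first I would fix $\varphi\in\mathrm{VALF}$ and, via Theorem~\ref{Th_main}, record that {\rm (E1)} holds and that $h$ admits an extension to $\mathbb{E}$ obeying {\rm (E2)--(E4)}. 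From the construction underlying Theorem~\ref{Th_main} I take the full Hamiltonian $H(t,x,s)$ on $\ir\times\mathbb{R}^n$ that extends $h$ off $\mathbb{E}$ and satisfies {\rm H1--H3}; for this $H$ and $\sigma\triangleq\varphi(\vartheta_0,\cdot)$ the function $\varphi$ is the minimax solution of (\ref{HJ}), (\ref{boundar_cond}).

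The point is then to realize this same $H$ by dynamics satisfying the Isaacs condition, and here $n=1$ enters decisively. By {\rm H3} the map $s\mapsto H(t,x,s)$ is positively homogeneous of degree one on $\mathbb{R}$, hence determined by the two numbers $\alpha(t,x)\triangleq H(t,x,1)$ and $\beta(t,x)\triangleq H(t,x,-1)$ via $H(t,x,s)=\alpha(t,x)s$ for $s\ge 0$ and $H(t,x,s)=-\beta(t,x)s$ for $s\le 0$. I would then seek \emph{separated} dynamics $f(t,x,u,v)=p(t,x,u)+q(t,x,v)$, because for such $f$ one has, in every dimension,
\[
\min_{u\in P}\max_{v\in Q}\langle s,f(t,x,u,v)\rangle=\min_{u\in P}\langle s,p(t,x,u)\rangle+\max_{v\in Q}\langle s,q(t,x,v)\rangle=\max_{v\in Q}\min_{u\in P}\langle s,f(t,x,u,v)\rangle,
\]
so the Isaacs condition holds automatically and $H^{(-)}=H^{(+)}$. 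Matching this Hamiltonian with the prescribed $H$ reduces, in dimension one, to elementary bookkeeping of scalar ranges: with two-point sets $P=\{u^-,u^+\}$, $Q=\{v^-,v^+\}$ I would write $p_{\min}=\min_u p$, $p_{\max}=\max_u p$, etc., and choose the four range functions so that $p_{\min}+q_{\max}=\alpha$ and $p_{\max}+q_{\min}=-\beta$; taking $p_{\min}\equiv 0$, $p_{\max}=\max\{0,-(\alpha+\beta)\}$, $q_{\max}=\alpha$, $q_{\min}=-\beta-p_{\max}$ satisfies the ordering constraints $p_{\min}\le p_{\max}$, $q_{\min}\le q_{\max}$ identically. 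Since $\min_u sp(u)$ is concave and $\max_v sq(v)$ is convex in $s$, their sum reproduces exactly $\alpha s$ on $s\ge0$ and $-\beta s$ on $s\le0$, i.e.\ equals $H$.

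With the decomposition in hand I would check that $f=p+q$ lies in $\mathrm{DYNI}(P,Q)$: continuity and local Lipschitzness of $f$ in $x$ ({\rm F1}, {\rm F2}) are inherited from the corresponding regularity of $\alpha,\beta$ furnished by {\rm H2}/{\rm (E4)}, while the sublinear growth {\rm F3} follows from $|\alpha(t,x)|,|\beta(t,x)|\le\Gamma(1+\|x\|)$ ({\rm H1} with $s=\pm1$), since all four range functions are then dominated by a multiple of $1+\|x\|$. Because $H^{(-)}=H^{(+)}=H$ for this $f$, the value $Val(\cdot,\cdot,P,Q,f,\sigma)$ of the resulting Isaacs game is the minimax solution of (\ref{HJ}), (\ref{boundar_cond}) with the same Hamiltonian $H$ and the same terminal function $\sigma$; by uniqueness of the minimax solution this is $\varphi$. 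Hence $\varphi\in\mathrm{VALI}$, giving $\mathrm{VALF}\subseteq\mathrm{VALI}$.

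The step I expect to be the genuine obstacle — and the one special to $n=1$ — is the representation of $H$ as $\min_u\langle s,p\rangle+\max_v\langle s,q\rangle$, i.e.\ as a sum of a concave and a convex positively homogeneous function. On $\mathbb{R}$ every degree-one homogeneous function admits such a decomposition trivially, since only the two directions $\pm1$ must be fitted; for $n\ge2$ this fails in general (a positively homogeneous Lipschitz Hamiltonian need not split this way), which is exactly why the Isaacs identity cannot be forced for arbitrary dynamics and why the equality $\mathrm{VALI}=\mathrm{VALF}$ is asserted only for $n=1$. A secondary point needing care is confirming that the full extension $H$ supplied by the proof of Theorem~\ref{Th_main} may be used globally here, not merely on $\mathbb{E}$, so that the two games share an identical Hamiltonian and hence an identical value.
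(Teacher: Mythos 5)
Your proposal is correct, and the outer reduction is exactly the paper's: $\mathrm{VALI}\subset\mathrm{VALF}=\mathrm{VALS}$ plus the single inclusion $\mathrm{VALF}\subset\mathrm{VALI}$ for $n=1$, obtained by re-realizing the Hamiltonian $H$ of a given game by dynamics satisfying the Isaacs condition and invoking uniqueness of the minimax solution. Where you genuinely diverge is in the key construction, which the paper delegates to Lemma~\ref{lm_evans_1}: there, $P=Q=\{-1,1\}\times\{-1,1\}$ and $f$ is given by the explicit formula $g(t,x,y,y',z,z')=(H(t,x,z)+\Upsilon(1+\|x\|))z'+\Upsilon(1+\|x\|)y+\Upsilon(1+\|x\|)(\langle y,z\rangle+1)y'$, after which the equality of $\max\min$ and $\min\max$ is verified by hand, using that the inner minimum is attained at $y=y'=-\|s\|^{-1}s$ independently of $(z,z')$. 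You instead build \emph{separated} dynamics $f=p(t,x,u)+q(t,x,v)$ with two-point control sets, for which the Isaacs condition holds automatically, and you fit the two values $\alpha=H(t,x,1)$, $\beta=H(t,x,-1)$ by an explicit choice of ranges ($p_{\min}=0$, $p_{\max}=\max\{0,-(\alpha+\beta)\}$, $q_{\max}=\alpha$, $q_{\min}=-\beta-p_{\max}$); the regularity checks F1--F3 from H1--H2 go through as you indicate. The paper's route has the virtue of being formally parallel to Lemmas~\ref{lm_Evans} and~\ref{lm_evans_min_max} (the same min--max template in all three cases), whereas yours makes structurally transparent both why the Isaacs condition is free (separation of controls) and why the argument is confined to $n=1$ (only the two directions $\pm1$ need to be matched by a concave-plus-convex positively homogeneous decomposition). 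One small economy you could add: rather than routing through Theorem~\ref{Th_main} and Lemma~\ref{lm_McShane} to manufacture $H$, it suffices to take $H=H^{(-)}$ of the original game directly, which already satisfies H1--H3 with $\Upsilon=\Lambda_f$ and already has $\varphi$ as its minimax solution; this is what the paper does.
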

The corollaries are proved in section 7.

\section{Examples}
\textit{First (positive) example}.

Let $n=2$, $t_0=0$, $\vartheta_0=1$. Consider the function $$\varphi^1(t,x_1,x_2)\triangleq t+|x_1|-|x_2|.$$ Let us show that $\varphi^1(\cdot,\cdot,\cdot)\in\mathrm{VALF}$.

Function $\varphi^1$ is differentiated on the set $$J=\{(t,x_1,x_2)\in (0,1)\times \mathbb{R}^2:x_1,x_2\neq 0\}.$$ If $(t,x_1,x_2)\in J$, then $$\frac{\partial\varphi^1(t,x_1,x_2)}{\partial t}=1,\probel \nabla\varphi^1(t,x_1,x_2)=(\sgn x_1,-\sgn x_2). $$ Here $\sgn x$ means the sign of $x$:
$$\sgn x=\left\{\begin{array}{rc}
           1, & x> 0, \\
           -1, & x< 0.
         \end{array}\right.
 $$

Therefore, if $(\theta,g_1,g_2)\in\SDup{\varphi^1}{t,x_1,x_2}\cup \SDdn{\varphi^1}{t,x_1,x_2}$, then $\theta=1$.

Let us determine the set $E_1(t,x_1,x_2)\subset\mathbb{R}^n$ and function $h^1(t,x_1,x_2;s_1,s_2)$ for $(t,x_1,x_2)\in J$ and $(s_1,s_2)\in E_1(t,x_1,x_2)$. The representation of $J$ and formulas for partial derivatives of $\varphi^1$ yield the following representation of $E(t,x_1,x_2)$ and $h^1(t,x_1,x_2)$ for
${(t,x_1,x_2)\in J}$  $$E_1(t,x_1,x_2)=\{(\sgn x_1,-\sgn x_2)\},$$
$$h^1(t,x_1,x_2;\sgn x_1,-\sgn x_2)=-1. $$ Notice that condition (E1) for $\varphi^1$ is fulfilled.  Let ${(t,x_1,x_2)\notin J}$, then
$$E_1(t,x_1,x_2)=\left\{\begin{array}{rl}
                   \{(s_1,-\sgn x_2):|s_1|=1\}, & x_1=0,x_2\neq 0, \\
                   \{(\sgn x_1,s_2):|s_2|=1\}, & x_1\neq 0,x_2=0, \\
                   \{(s_1,s_2):|s_1|=|s_2|=1\}, & x_1=x_2=0.
                 \end{array}\right.
$$ If $(t,x_1,x_2)\notin J$, then for $(s_1,s_2)\in E_1(t,x_1,x_2)$ put $$h^1(t,x_1,x_2;s_1,s_2)=-1. $$ Now let us determine $\SDup{\varphi^1}{t,x_1,x_2}$ and $\SDdn{\varphi^1}{t,x_1,x_2}$ for $(t,x_1,x_2)\notin J$.

Let  $x_2\neq 0$, then $$\SDdn{\varphi^1}{t,0,x_2}=\{(1,s_1,-\sgn x_2):s_1\in [-1,1]\},\probel \SDup{\varphi^1}{t,0,x_2}=\varnothing. $$ Indeed, function $\varphi^1$ has directional derivatives at  points $(t,0,x_2)$ for $x_2\neq 0$. In addition, derivative in the direction $(\tau,g_1,g_2)$ is $$d\varphi^1(t,0,x_2;\tau,g_1,g_2)=\lim_{\alpha\rightarrow 0}\frac{\varphi^1(t+\alpha\tau,x_1+g_1\alpha,x_2+g_2\alpha)-\varphi^1(t,x_1,x_2)}{\alpha}=\tau+|g_1|-g_2\sgn x_2 . $$ We have,
\begin{multline*}
\{(1,s_1,-\sgn x_2):s_1\in [-1,1]\}=\{(\theta,s_1,s_2):(\theta\tau+s_1g_1+s_2g_2)\leq d\varphi^1(t,0,x_2;\tau,g_1,g_2)\}=\\=\SDdn{\varphi^1}{t,0,x_2}.
\end{multline*}

Similarly, for $x_1\neq 0$ we have $$\SDup{\varphi^1}{t,x_1,0}=\{(1,\sgn x_1,s_2):s_1\in [-1,1]\},\probel \SDdn{\varphi^1}{t,0,x_2}=\varnothing. $$

Further, $$\SDup{\varphi^1}{t,0,0}=\SDdn{\varphi^1}{t,0,0}=\varnothing. $$ Naturally, function $\varphi^1$ has directional derivatives at point $(t,0,0)$ and $$d\varphi^1(t,x_1,x_2;\tau,g_1,g_2)=\tau+|g_1|-|g_2|. $$ Suppose that $\SDup{\varphi^1}{t,0,0}\neq 0$. If $$(\theta,s_1,s_2)\in \SDup{\varphi^1}{t,x_1,x_2}, $$ then $$s_1 g_1\geq |g_1|\probel \forall g_1\in \mathbb{R}. $$ This yields that  $s_1\geq 1$ and $s_1\leq -1$. Thus, $\SDup{\varphi^1}{t,0,0}=\varnothing$. Similarly, $\SDdn{\varphi^1}{t,0,0}=\varnothing$.

Therefore, in this case $$ CJ^-=\{(t,0,x_2)\in (0,1)\times \mathbb{R}^2:x_2\neq 0\},$$ $$CJ^+=\{(t,x_1,0)\in (0,1)\times \mathbb{R}^2:x_1\neq 0\}. $$

We have $$E_2(t,x_1,x_2)=\left\{\begin{array}{lr}
                                      \{(1,s_1,-\sgn x_2):s_1\in [-1,1]\}, & x_1=0, x_2\neq 0,\\
                                      \{(1,\sgn x_1,s_2):s_1\in [-1,1]\}, & x_1\neq 0, x_2=0, \\
                                      \varnothing, & x_1x_2\neq 0,\mbox{ or }x_1=x_2=0.
                                    \end{array}\right.
$$ Use the corollary \ref{coll_suff}  to extend function  $h^1$ to the set $\mathbb{E}_2$.
Let $(t,x_1,x_2)\in CJ^-\cup CJ^+$, $s=(s_1,s_2)\in E_2(t,x_1,x_2)$, put $h^1(t,x_1,x_2,s_1,s_2)\triangleq -1.$ Since  for any $s'=(s_1',s_2')\in E_1(t,x_1,x_2)$ $h(t,x_1,x_2,s_1',s_2')=-1$, one can suppose that $h^1(t,x_1,x_2,s_1,s_2)$ is determined by (\ref{h_2_spec_case}).

Notice that condition (E3) is fulfilled since for any position $(t,x_1,x_2)$ the set $E(t,x_1,x_2)$ doesn't contain codirectional vectors  as well as vector $(0,0)$. It is easy to check that condition (E4) holds.

\textit{Second (negative) example.}

Let $n=2$, $t_0=0$, $\vartheta_0=1$. Let us show that $$\varphi^2(t,x_1,x_2)\triangleq t(|x_1|-|x_2|)\notin \mathrm{VALF}.$$

Function $\varphi^2(\cdot,\cdot,\cdot)$ is differentiated on the set $$J=\{(t,x_1,x_2)\in (0,1)\times \mathbb{R}^2:x_1x_2\neq 0\}. $$ We have $$\frac{\partial\varphi^2(t,x_1,x_2)}{\partial t}=|x_1|-|x_2|,\probel \nabla\varphi^2(t,x_1,x_2)=(t\cdot\sgn{x_1},-t\cdot\sgn{x_2}) $$ for $(t,x_1,x_2)\in J$.
Thus, for $(t,x_1,x_2)\in J$ $$h^2(t,x_1,x_2,t\cdot\sgn{x_1},t\cdot\sgn{x_2})=-(|x_1|-|x_2|),$$
$$E_1(t,x_1,x_2)=(t\cdot\sgn{x_1},-t\cdot\sgn{x_2}). $$ Further, if $(t,x_1,x_2)\in J$, $(s_1,s_2)\in E(t,x)$, then $\|(s_1,s_2)\|=t\sqrt{2}$. Thus for $(t,x_1,x_2)\in J$  the following equality is fulfilled $$E^\natural(t,x_1,x_2)=(\sgn{x_1}/\sqrt{2},-\sgn{x_2}/\sqrt{2}). $$
One can check directly that the condition (E1) holds in this case. Therefore we may suppose that $h^2(t,x_1,x_2,s_1,s_2)$ is defined on $\mathbb{E}_1$. Here we use formula (\ref{h_def_lim}).

Let us introduce the set $\mathbb{E}_0\subset(0,1)\times\mathbb{R}^2\times\mathbb{R}^2$.  Put
$$\mathbb{E}_0\triangleq\{(t,x_1,x_2,t\cdot\sgn{x_1},-t\cdot\sgn{x_2}):(t,x_1,x_2)\in J\}.$$ By definition of $\mathbb{E}$ we have $\mathbb{E}_0\subset\mathbb{E}$.

Suppose that there exists extension of the function $h^2$ satisfying the conditions (E2) and (E3). Hence the set $$\mathbb{E}_0^\natural\triangleq\{(t,x_1,x_2,\sgn{x_1}/\sqrt{2},-\sgn{x_2}/\sqrt{2}):(t,x_1,x_2)\in J\} $$ is subset of $\mathbb{E}^\natural$. Further, the function $(h^2)^\natural$ is well defined on $\mathbb{E}_0$. In this case $$(h^2)^\natural(t,x_1,x_2,\sgn{x_1}/\sqrt{x_1},-\sgn{x_2}/\sqrt{2})=\frac{|x_1|-|x_2|}{t\sqrt{2}}.$$ Obviously,  $(h^2)^\natural$ is unbounded on $(0,1)\times A\times \mathbb{R}^n\cap \mathbb{E}_0^\natural$. Here $A$ is any nonempty bounded subset of the set $\{(x_1,x_2)\in\mathbb{R}^n:x_1x_2\neq 0\}$). Hence, condition (E4) does not hold for any extension of  $h^2$. Thus $\varphi^2\notin\mathrm{VALF}$.

\section{Extension of $h$ to the whole space}
This section is devoted to the extension of $h$ to the space
$\ir\times\mathbb{R}^n$. This result is based on  McShane theorem about extension of range of function \cite{McShane}.
\begin{Lm}\label{lm_McShane} Under conditions {\rm (E1)--(E4)} function $h:\mathbb{E}\rightarrow\mathbb{R}$ can be extended  to $\ir\times \mathbb{R}^n$ such that the extension satisfies the conditions  {\rm H1--H3}.
\end{Lm}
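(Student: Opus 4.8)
The plan is to reduce the problem to the unit sphere by homogeneity, perform a McShane extension there, and then rebuild $H$ on all of $\ir\times\mathbb{R}^n$ by positive homogeneity. First I would note that, by (E3), it suffices to extend the spherical function $h^\natural$ from $\mathbb{E}^\natural$ to a function $\bar h^\natural$ on all of $\ir\times S^{(n-1)}$ enjoying two properties: (i) a two-sided growth bound $|\bar h^\natural(t,x,s)|\le\Upsilon(1+\|x\|)$, and (ii) on every slab $[t_0,\vartheta_0]\times A\times S^{(n-1)}$ a modulus of continuity of the form $\omega_A(|t'-t''|)+L_A\|x'-x''\|+\Lambda_A\|s'-s''\|$. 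Granting such a $\bar h^\natural$, I would set $H(t,x,0)\triangleq 0$ and $H(t,x,s)\triangleq\|s\|\,\bar h^\natural(t,x,\|s\|^{-1}s)$ for $s\neq 0$. Then H3 holds by construction, and by (E3) this $H$ restricts to the given $h$ on $\mathbb{E}$. Property H1 is immediate from (i) since $|H(t,x,s)|=\|s\|\,|\bar h^\natural(t,x,\|s\|^{-1}s)|$; and the elementary fact that a degree-one positively homogeneous function whose sphere-restriction is bounded by $M$ and $\lambda$-Lipschitz is globally $(M+\lambda)$-Lipschitz in $s$ produces the $\Upsilon(1+\inf\{\|x'\|,\|x''\|\})\|s'-s''\|$ term of H2, which I would then combine with the $t$- and $x$-estimates of (ii) to obtain H2 in full.

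For the core extension, fix a bounded region $A$ and the constant $\Lambda_A\triangleq\Gamma(1+\sup_{x\in A}\|x\|)$. The function $\Theta_A(\Delta t,\Delta x,\Delta s)\triangleq\omega_A(|\Delta t|)+L_A\|\Delta x\|+\Lambda_A\|\Delta s\|$ is a symmetric, nondecreasing, semiadditive modulus, being the sum of the even semiadditive $\omega_A\in\Omega$ and two linear terms; by (E4) it dominates the oscillation of $h^\natural$ on the part of $\mathbb{E}^\natural$ lying over $A$. Hence McShane's theorem \cite{McShane} applies: the infimum formula $g_A(p)=\inf\{\,h^\natural(q)+\Theta_A(p,q):q\in\mathbb{E}^\natural,\ q\ \text{over}\ A\,\}$ extends $h^\natural$ and keeps the modulus $\Theta_A$. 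Using the exhaustion $A_m=\{\|x\|\le m\}$ and replacing $\omega_{A_m},L_{A_m}$ by increasing majorants with $L_{A_m}\ge\Gamma$, each $g_m:=g_{A_m}$ satisfies the two-sided growth bound on its slab: the upper bound because the projection of $\mathbb{E}^\natural$ onto $\ir$ is dense and the $s$-term of $\Theta_{A_m}$ is bounded on the sphere, the lower bound because $L_{A_m}\ge\Gamma$ lets the $x$-Lipschitz term of $\Theta_{A_m}$ absorb the linear growth of $-h^\natural$.

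The subtlety is that the $g_m$ need not agree on overlaps. To assemble a single $\bar h^\natural$ I would glue them by a partition of unity $\{\chi_m\}$ in $x$, subordinate to the cover $\{\|x\|<m+1\}$, locally finite and with uniformly Lipschitz $\chi_m$, setting $\bar h^\natural\triangleq\sum_m\chi_m g_m$. The interpolation property $\bar h^\natural=h^\natural$ on $\mathbb{E}^\natural$ survives, since every $g_m$ active at a point of $\mathbb{E}^\natural$ already equals $h^\natural$ there and $\sum_m\chi_m\equiv 1$. On any bounded region only finitely many $\chi_m$ are nonzero, so $\bar h^\natural$ inherits the $t$- and $s$-moduli and the growth bound from the $g_m$ directly; the region-dependence bites only in the $x$-difference, through the cross-term $\sum_m[\chi_m(x')-\chi_m(x'')]\,g_m(t,x',s)$. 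I expect this cross-term to be the main obstacle: I would control it using the local finiteness and the Lipschitz bound of $\{\chi_m\}$ together with $|g_m(t,x',s)|\le\Upsilon(1+\|x'\|)$, so that the growth bound furnished by the first bullet of (E4) converts it into a contribution $L'_A\|x'-x''\|$ with a region-dependent constant — exactly the shape H2 permits.

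Once (i) and (ii) are established for $\bar h^\natural$, the verifications of H1, H3 and the $s$-Lipschitz part of H2 for the reconstructed $H$ are the routine homogeneity computations sketched in the first paragraph, and the lemma follows. The only genuinely delicate point is the handling of the region-dependence of the modulus via the exhaustion-and-glue argument, where condition (E4) is used twice over: once to legitimize the per-region McShane step, and once, through its growth bound, to tame the partition-gradient cross-term.
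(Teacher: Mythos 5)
Your overall strategy --- normalize to the sphere via (E3), extend $h^\natural$ by a McShane formula over bounded regions, reassemble $H$ by positive homogeneity, and recover the $s$-part of H2 from boundedness plus Lipschitzness of the sphere restriction --- is the same skeleton as the paper's proof. The gluing is where you diverge: the paper adjoins unit cubes $\Pi(e_k,1)$ one at a time and extends by a sup-formula over the enlarged cube $\Pi(e_k,3)$, so successive extensions agree by construction and no partition of unity is needed; your exhaustion-plus-partition-of-unity assembly is a workable alternative, since the cross-term $\sum_m[\chi_m(x')-\chi_m(x'')]g_m$ is absorbed into the region-dependent $L_A\|x'-x''\|$ term exactly because $\sum_m\chi_m\equiv 1$ and the $g_m$ obey the growth bound. (One indexing point: for $\bar h^\natural$ to interpolate $h^\natural$ you need $\mathrm{supp}\,\chi_m$ contained in the region $A_m$ from whose data $g_m$ was built, not merely in $\{\|x\|<m+1\}$.)

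The genuine gap is the upper growth bound $g_m\le\Gamma(1+\|x\|)$. You justify it by asserting that the projection of $\mathbb{E}^\natural$ onto $\ir$ is dense; this is false in general. The set $\mathbb{E}^\natural$ records only the \emph{nonzero} elements of $E(t,x)$ after normalization, and nothing in (E1)--(E4) prevents $E(t,x)=\{0\}$ on a set with nonempty interior (take $\varphi$ independent of $x$ there; in the extreme case $\varphi\equiv\mathrm{const}$ one has $\mathbb{E}^\natural=\varnothing$). Then the infimum in your formula for $g_A$ ranges over a set that misses an entire neighbourhood --- or is empty, making $g_A\equiv+\infty$ --- and no upper bound follows. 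The lower bound you derive from $L_{A_m}\ge\Gamma$ is correct; the upper bound needs the symmetric device: cap the extension, $g_A(p)\triangleq\min\{\Gamma(1+\|x\|),\,\inf\{h^\natural(q)+\Theta_A(p,q)\}\}$, and then verify (again using $L_{A_m}\ge\Gamma$) that the cap destroys neither the interpolation property nor the modulus. This is precisely the mirror image of the paper's floor $\max\{-\Gamma(1+\|x\|),\,\sup\{\cdots\}\}$ in the definition of $h_k$. With that repair the rest of your argument goes through.
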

\begin{proof}
The extension of $h$ is designed by two stages. First we extend function $h^\natural:\mathbb{E}^\natural\rightarrow\mathbb{R}$ to $\ir\times S^{(n-1)}$.  Finally we complete a definition by positive homogeneously.

Let us define the  function $h^*:\ir\times S^{(n-1)}\rightarrow\mathbb{R}$. The function is designed to be a extension of $h^\natural$. In order to define $h^*$ we design sequence of sets $\{G_r\}_{r=0}^\infty$, $G_r\subset \ir\times S^{(n-1)}$, and sequence of functions $\{h_r\}_{r=0}^\infty$, $h^r:G_r\rightarrow \mathbb{R}$, possessing following properties.
\begin{list}{(G\arabic{tmp4})}{\usecounter{tmp4}}
  \item\label{cond_G_0} $G_0=\mathbb{E}^\natural$, $h_0=h^\natural$
  \item\label{cond_inclusion} $G_{r-1}\subset G_r$ for all $r\in \mathbb{N}$.
  \item\label{cond_union} $\bigcup_{r=0}^\infty G_r=\ir\times S^{(n-1)}; $
  \item\label{cond_extension} for every natural number $r$ the restriction of $h_{r}$ on $G_{r-1}$ coincides with $h_{r-1}$;
  \item\label{cond_sub_lin} for any $(t,x,s)\in G_r$ the following inequalities is fulfilled:
$$|h_r(t,x,s)|\leq \Gamma(1+\|x\|),$$
\item\label{cond_continuity} for every $r\in \mathbb{N}_0$ and every bounded set $A\subset \mathbb{R}^n$
there exist constant $L_{A,r}$ and function $\omega_{A,r}\in \Omega$  such that
for any $(t',x',s'), (t'',x'',s'')\in G_r\cap [t_0,\vartheta_0]\times A\times S^{(n-1)}$ the following inequality is fulfilled:
\begin{multline}\label{h_lipshitz}
|h_r(t',x',s')-h_r(t'',x'',s'')|\leq \\ \leq
\omega_{A,r}(t'-t'')+L_{A,r}\|x'-x''\|+\Gamma(1+\inf\{\|x'\|,\|x''\|\})\|s'-s''\|.
\end{multline}
\end{list}
Here $\mathbb{N}_0\triangleq \mathbb{N}\cup\{0\}$.

We define function $h^*$ in the following way: for every
$(t,x,s)\in [t_0,\vartheta_0]\times\mathbb{R}^n\times S^{(n)}$ $h^*(t,x,s)=h_l(t,x,s)$.
Here $l$ is the least number $k\in\mathbb{N}_0$ such that $(t,x,s)\in G_k$.

Now let us define the sets $G_r$.  If $x\in \mathbb{R}^n$,
$j\in \overline{1,n}$, then by $x^j$ denote the $j$-th coordinate of $x$. By
$\|\cdot\|_*$ denote the following norm of $x$:
$$\|x\|_*\triangleq\max_{j=\overline{1,n}}|x^j|.$$
If $x\in\mathbb{R}^n$,
then \begin{equation}\label{zv_usual_relation}
    \|x\|_*\leq \|x\|.
\end{equation} Indeed, $$\|x\|=\sqrt{\sum_{j=1}^n(x^j)^2}\geq \sqrt{\max_j(x^j)^2}. $$
Let $e\in \mathbb{Z}^n$, let $a\in [0,\infty)$. ($\mathbb{Z}$ means the set of integer numbers.)
By $\Pi(e,a)$ denote $n$-dimensional cube with center at $e$ and length of edge which is equal to $a$:
$$\Pi(e,a)\triangleq\left\{x\in \mathbb{R}^n: \|e-x\|_*\leq \frac{a}{2}\right\}.
$$ If $a\geq 1$, then $$\mathbb{R}^n=\bigcup_{e\in\mathbb{Z}^n}\Pi(e,a). $$
Order elements $e\in \mathbb{Z}^n$, such that the following implication holds:
if  $\|e_i\|_*\leq \|e_k\|_*$, then $i\leq k$. Define the sequence $\{G_r\}_{r=0}^\infty$ by the rule:
\begin{equation}\label{G_def}
G_0\triangleq
\mathbb{E}^\natural,\probel G_k\triangleq G_{k-1}\cup ([t_0,\vartheta_0]\times\Pi(e_k,1)\times
S^{(n-1)})\probel\forall k\in\mathbb{N}.
\end{equation}
We have
$$[t_0,\vartheta_0]\times\mathbb{R}^n\times
S^{(n-1)}=\bigcup_{k\in\mathbb{N}_0}G_k.$$
Thus conditions (G\ref{cond_G_0})--(G\ref{cond_union}) are fulfilled by definition.

Now let us determine sequence of functions $\{h_r\}$. Put $$h_0(t,x,s)\triangleq h^\natural(t,x,s)\probel \forall (t,x,s)\in G_0=\mathbb{E}^\natural.$$
Notice that for $r=0$ conditions (G\ref{cond_sub_lin}) and (G\ref{cond_continuity}) are fulfilled by (E4).

Now suppose that function $h_{k-1}$ is determined on $G_{k-1}$ such that conditions
(G\ref{cond_sub_lin}) and (G\ref{cond_continuity}) hold with $r=k-1$. Let us determine function $h_k:G_k\rightarrow\mathbb{R}$.

Denote by $L_k$ the constant $L_{A,k-1}$ in the condition (G\ref{cond_continuity}) with $A=\Pi(e_k,3)$. We may assume that
\begin{equation}\label{Lambda_neq}
L_k\geq \Gamma.
\end{equation} By $\omega_k$ we denote the function $\omega_{A,k-1}$ with $A=\Pi(e_k,3)$.

Let $(t,x,s)\in G_k$. For $(t,x,s)\notin \pis{k}{1}$ put $h_k(t,x,s)\triangleq
h_{k-1}(t,x,s)$. For $(t,x,s)\in \pis{k}{1}$ put \begin{multline}\label{h_k_definition}h_k(t,x,s)\triangleq
\max\{-\Gamma(1+\|x\|),\\\sup\{h_{k-1}(\tau,y,\xi)-\omega_k(t-\tau)-L_k\|x-y\|-\Gamma(1+\|x\|)\|s-\xi\|:\\(\tau,y,\xi)\in
G_{k-1}\cap (\pis{k}{3})\}\}.\end{multline}

Let us show that the condition (G\ref{cond_extension}) is fulfilled for $r=k$. This means that  $h_k(t,x,s)=h_{k-1}(t,x,s)$ for  $(t,x,s)\in G_{k-1}\cap(\pis{k}{1})$. We have
\begin{multline*}
    \sup\{h_{k-1}(\tau,y,\xi)-\omega_k(t-\tau)-L_k\|x-y\|-\Gamma(1+\|x\|)\|s-\xi\|:\\(\tau,y,\xi)\in
G_{k-1}\cap (\pis{k}{3})\}\geq h_{k-1}(t,x,s)\geq-\Gamma(1+\|x\|).
\end{multline*}
Hence,
 \begin{multline}\label{h_k_k_minus_1}
   h_k(t,x,s)= \sup\{h_{k-1}(\tau,y,\xi)-\omega_k(t-\tau)-L_k\|x-y\|-\Gamma(1+\|x\|)\|s-\xi\|:\\(\tau,y,\xi)\in
G_{k-1}\cap (\pis{k}{3})\}\geq h_{k-1}(t,x,s).
\end{multline}
 Let
$\varepsilon>0$, let $(\tau,y,\xi)\in G_k\cap(\pis{k}{3})$ be an element satisfying the inequality
\begin{equation}\label{h_eps} h_k(t,x,s)\leq
h_{k-1}(\tau,y,\xi)-\omega_k(t-\tau)-L_k\|x-y\|-\Gamma(1+\|x\|)\|s-\xi\|+\varepsilon.
\end{equation}
Using (\ref{h_lipshitz}) with $r=k-1$ and $A=\Pi(e_k,3)$, we obtain
$$
h_{k-1}(\tau,y,\xi)-h_{k-1}(t,x,s)\leq
\omega_k(t-\tau)+L_k\|x-y\|+\Gamma(1+\inf\{\|x\|,\|y\|\})\|s-\xi\|.
$$ 
This and formula (\ref{h_eps}) yield the following estimate:
$$h_{k}(t,x,s)-h_{k-1}(t,x,s)\leq \varepsilon.$$ Since $\varepsilon$ is arbitrary
we obtain that $h_{k}(t,x,s)\leq h_{k-1}(t,x,s)$ for $(t,x,s)\in G_{k-1}\cap(\pis{k}{1})$.
The opposite inequality is established above (see (\ref{h_k_k_minus_1})). Therefore,
if {$(t,x,s)\in G_{k-1}\cap (\pis{k}{1})$}, then ${h_{k}(t,x,s)=h_{k-1}(t,x,s)}$. Thus function $h_k$ is an extension of $h_{k-1}$.

Moreover, one can prove the following implication: if  $(t,x,s)\in G_{k-1}\cap (\pis{k}{3})$, then
\begin{multline}\label{h_eq_for_3}h_{k-1}(t,x,s)
=\sup\{h_{k-1}(\tau,y,\xi)-\omega_k(t-\tau)-L_k\|x-y\|-\Gamma(1+\|x\|)\|s-\xi\|:\\(\tau,y,\xi)\in
G_{k-1}\cap (\pis{k}{3})\}.
\end{multline}

Let $(t,x,s)\in G_k\cap(\pis{k}{3})$. We shall say that the sequence $\{(t_i,x_i,s_i)\}_{i=1}^\infty\subset G_{k-1}\cap
(\pis{k}{3})$ realizes the value of $h_k(t,x,s)$, if
\begin{equation}\label{realized_seq}
    h_k(t,x,s)
    =\lim_{i\rightarrow\infty}[h_{k-1}(t_i,x_i,s_i)-\omega_k(t-t_i)-L_k\|x-x_i\|-\Gamma(1+\|x\|)\|s-s_i\|].
\end{equation}

If $h_k(t,x,s)> -\Gamma(1+\|x\|)$, then at least one sequence realizing the value of $h_k(t,x,s)$ exists (see (\ref{h_k_definition})).

Now we prove that $h_k$ satisfies the condition (G\ref{cond_sub_lin}) for $r=k$. Obviously, we may consider only triples $(t,x,s)\in \pis{k}{1}$.
 If $h_k(t,x,s)=-\Gamma(1+\|x\|)$, then the sublinear growth condition holds. Now let $h_k(t,x,s)>-\Gamma(1+\|x\|)$. Let sequence $\{(\tau_i,y_i,\xi_i)\}_{i=1}^\infty\subset
G_{k-1}\cap(\pis{k}{3})$ realize the value of $h_k(t,x,s)$.
Using inequality (\ref{Lambda_neq}) we obtain
\begin{multline*}
h_{k-1}(\tau_i,y_i,\xi_i)-\omega_k(t-\tau_i)-L_k\|x-y_i\|-\Gamma(1+\|x\|)\|s-\xi_i\|\leq \\
\leq \Gamma(1+\|y_i\|)-L_k\|x-y_i\|\leq \Gamma(1+\|x\|)+\Gamma\|x-y_i\|-L_k\|x-y_i\|\leq
\Gamma(1+\|x\|).
\end{multline*}
Consequently (see \ref{Lambda_neq}), the condition (G\ref{cond_sub_lin}) holds for $r=k$.

Let us show that $h_k$ satisfies the condition
(G\ref{cond_continuity}) for  $r=k$.
Let  $A$ be a bounded subset of $\mathbb{R}^n$, let $(t',x',s')$, $(t'',x'',s'')\in ([t_0,\vartheta_0]\times A\times S^{(n-1)})\cap G_k$. We estimate the difference  $h_k(t',x',s')-h_k(t'',x'',s'')$.

Let us consider 3 cases.
\begin{list}{\roman{tmp4}.}{\usecounter{tmp4}}
\item  $(t',x',s'),\ \ (t'',x'',s'')\ \ \notin \pis{k}{1}$.

Since $h_k(t,x,s)=h_{k-1}(t,x,s)$ for $(t,x,s)\in G_{k}\setminus \pis{k}{1}$, we have
    \begin{multline}\label{h_lip_oo}
    h_k(t',x',s')-h_k(t'',x'',s'')\leq \\ \leq \omega_{A,k-1}(t'-t'')+L_{A,k-1}\|x'-x''\|+\Gamma(1+\inf\{\|x'\|,\|x''\|\})\|s'-s''\|.
    \end{multline}

\item $(t',x',s'),(t'',x'',s'')\in \pis{k}{3}$ and at least one triple is in $\pis{k}{1}$.

From the definition of $h_k$ it follows that two subcases are possible.
    \begin{itemize}
        \item $h_k(t',x',s')=-\Gamma(1+\|x'\|)$.  In this case
        \begin{equation}\label{h_lip_iii_1}
h_k(t',x',s')-h_k(t'',x'',s'')\leq  -\Gamma(1+\|x'\|)+\Gamma(1+\|x''\|)\leq
\Gamma\|x''-x'\|.
\end{equation}
        \item $h_k(t',x',s')>-\Gamma(1+\|x'\|)$. 
        Let the sequence $\{(t_i,x_i,s_i)\}_{i=1}^\infty\subset G_{k-1}\cap(\pis{k}{3})$
        realize the value of $h_k(t',x',s')$. By (\ref{h_eq_for_3}) for $(t,x,s)=(t',x',s')$ and inequality $\|s''-s'\|,\|s'-s_i\|\leq 2$ we have
\begin{multline*}
    h_{k-1}(t_i,x_i,s_i)-\omega_k(t'-t_i)-L_k\|x'-x_i\|-\Gamma(1+\|x'\|)\|s'-s_i\|-h_k(t'',x'',s'')\leq\\
\leq
h_{k-1}(t_i,x_i,s_i)-\omega_k(t'-t_i)-L_k\|x'-x_i\|-\Gamma(1+\|x'\|)\|s'-s_i\|-\\-h_{k-1}(t_i,x_i,s_i)+
\omega_k(t''-t_i)+L_k\|x''-x_i\|+\Gamma(1+\|x''\|)\|s''-s_i\|
\leq \\
\leq \omega_k(t'-t'')+L_k\|x'-x''\|+\Gamma(1+\|x''\|)(\|s''-s_i\|-\|s'-s_i\|)+\\+\Gamma(\|x''\|-\|x'\|)\|s'-s_i\|\leq
\\
\leq\omega_k(t'-t'')+L_k\|x'-x''\|+\Gamma(1+\|x''\|)\|s'-s''\|+2\Gamma\|x'-x''\|\leq \\
\leq \omega_k(t'-t'')+(L_k+4\Gamma)\|x'-x''\|+\Gamma(1+\inf\{\|x'\|,\|x''\|\})\|s'-s''\|.
\end{multline*}
Hence, \begin{multline}\label{h_lip_iii_2}
    h_k(t',x',s')-h_k(t'',x'',s'')\leq \\
\omega_k(t'-t'')+(L_k+4\Gamma)\|x'-x''\|+\Gamma(1+\inf\{\|x'\|,\|x''\|\})\|s'-s''\|.
\end{multline}
    \end{itemize}
\item One of triples $(t',x',s')$, $(t'',x'',s'')$ belongs to $\pis{k}{1}$, and another triple doesn't belong to  $\pis{k}{3}$.

    Therefore, $\|x'-x''\|\geq\|x'-x''\|_*> 1$ (see (\ref{zv_usual_relation})). Since condition (G\ref{cond_sub_lin}) for $r=k$ is established above, we have
    \begin{equation}\label{h_lip_II}
    h(t',x',s')-h(t'',x'',s'')\leq 2\Gamma(1+\sup_{y\in A}\|y\|)\leq 2\Gamma(1+\sup_{y\in A}\|y\|)\|x'-x''\|.
    \end{equation}

\end{list}

The estimates (\ref{h_lip_oo})--(\ref{h_lip_II}) yield that if $(t',x',s'), (t'',x'',s'')\in G_k\cap ([t_0,\vartheta_0]\times A\times S^{(n-1)})$, then
\begin{equation}\label{h_res}
    h_k(t',x',s')-h_k(t'',x'',s'')\leq \omega_{A,k}(t'-t'')+L_{A,k}\|x'-x''\|+\Gamma(1+\inf\{\|x'\|,\|x''\|\})\|s'-s''\|.
\end{equation} Here $\omega_{A,k}$ is defined by the rule
$$\omega_{A,k}(\delta)\triangleq\max\{\omega_{A,k-1}(\delta),\omega_{k}(\delta)\} $$ (one can check directly that $\omega_{A,k}\in \Omega$); the constant $L_{A,k}$ is defined by the rule $$L_{A,k}\triangleq\max\left\{L_{A,k-1},L_k+4\Gamma,\Gamma(1+\sup_{y\in A}\|y\|)\right\}.$$

Therefore the condition (G\ref{cond_continuity}) is fulfilled for $r=k$.

This completes the designing of sequences $\{G_r\}_{r=0}^\infty$ and
$\{h_r\}_{r=0}^\infty$ satisfying the conditions (G\ref{cond_G_0})--(G\ref{cond_continuity}).

For every $(t,x,s)\in[t_0,\vartheta_0]\times \mathbb{R}^n\times S^{(n-1)}$ there exists number
$k\in\mathbb{N}_0$ such that $(t,x,s)\in G_k$. Put $$h^*(t,x,s)\triangleq
h_k(t,x,s).$$ The value of  $h^*(t,x,s)$ doesn't depend on number $k$ satisfying the property
$(t,x,s)\in G_k$. By definition of $h_k$ (see
(G\ref{cond_sub_lin})) we have
$$h^*(t,x,s)\leq \Gamma(1+\|x\|).
$$ Let us prove that for every bounded set $A\subset\mathbb{R}^n$ there exist function $\omega_A\in \Omega$ and constant
$L_A$ such that for all $(t',x',s'), (t'',x'',s'')\in [t_0,\vartheta_0]\times A\times
S^{(n-1)}$ the following estimate is fulfilled \begin{equation}\label{h_star_lip} |h^*(t',x',s')-h^*(t'',x'',s'')|\leq
\omega_A(t'-t'')+L_A\|x'-x''\|+\Gamma(1+\inf\{\|x'\|,\|x''\|\})\|s'-s''\|.
\end{equation}
Indeed, there exists number $m$ such that $$A\subset \bigcup_{k=1}^m\Pi(e_k,1).$$ By definition of
$\{G_k\}$ (see (\ref{G_def})) we have
$$[t_0,\vartheta_0]\times A\times S^{(n-1)}\probel\subset\probel [t_0,\vartheta_0]\times\left[\bigcup_{k=1}^m\Pi(e_k,1)\right]\times
S^{(n-1)}\subset G_m.
$$ Put $\omega_A\triangleq\omega_{A,m}$, $L_A\triangleq L_{A,m}$. Since
$h^*(t,x,s)=h_m(t,x,s)$ $\forall (t,x,s)\in [t_0,\vartheta_0]\times A\times S^{(n-1)}$, the property (G\ref{cond_continuity}) for $r=m$ yields that \begin{multline*}
    |h^*(t',x',s')-h^*(t'',x'',s'')|=|h_m(t',x',s')-h_m(t'',x'',s'')|\leq\\\leq
\omega_{A,m}(t'-t'')+L_{A,m}\|x'-x''\|+\Gamma(1+\inf\{\|x'\|,\|x''\|\})\|s'-s''\|.
\end{multline*}
Thus, the inequality (\ref{h_star_lip}) is fulfilled.

Now let us introduce the function $H:[t_0,\vartheta_0]\times \mathbb{R}^n\times \mathbb{R}^n\rightarrow\mathbb{R}$.
Put \begin{equation}\label{H_large_def}H(t,x,s)\triangleq\left\{
\begin{array}{cr}
  \|s\|h^*(t,x,\|s\|^{-1}s), & s\neq 0 \\
  0, & s=0. \\
\end{array}\right.
\end{equation}

Function $H$ is an extension of $h$. Naturally, let $(t,x,s)\in \mathbb{E}$, $s\neq 0$. Then
$(t,x,\|s\|^{-1}s)\in \mathbb{E}^\natural$. Hence,
$$ H(t,x,s)=\|s\|h^*(t,x,\|s\|^{-1}s)=\|s\|h^\natural(t,x,\|s\|^{-1}\|s\|)=h(t,x,s).
$$ If $(t,x,0)\in\mathbb{E}$, then by condition (E3) we have $$h(t,x,0)=0=H(t,x,0). $$

Function $H$ satisfies the condition H2. Let $s_1,s_2\in \mathbb{R}^n$, $(t,x)\in\ir$. Let us estimate  $|H(t,x,s_1)-H(t,x,s_2)|$.  Without loss of generality it can be assumed that   $\|s_1\|\geq \|s_2\|$. If $\|s_2\|=0$, then
 \begin{equation}\label{H_0_estima} |H(t,x,s_1)-H(t,x,s_2)|=|H(t,x,s_1)|\leq
\Gamma(1+\|x\|)\|s_1\|=\Gamma(1+\|x\|)\|s_1-s_2\|.
\end{equation} Now let $\|s_2\|>0$.
\begin{multline}\label{H_1_estima}
    |H(t,x,s_1)-H(t,x,s_2)|=\left|\|s_1\|h^*\left(t,x,\frac{s_1}{\|s_1\|}\right)-
    \|s_2\|h^*\left(t,x,\frac{s_2}{\|s_2\|}\right)\right|\leq\\\leq
(\|s_1-s_2\|)\left|h^*\left(t,x,\frac{s_1}{\|s_1\|}\right)\right|+\|s_2\|\left|h^*\left(t,x,\frac{s_1}{\|s_1\|}\right)-h^*\left(t,x,\frac{s_2}{\|s_2\|}\right)\right|\leq\\
\leq
\Gamma(1+\|x\|)\|s_1-s_2\|+\|s_2\|\Gamma(1+\|x\|)\left\|\frac{s_1}{\|s_1\|}-\frac{s_2}{\|s_2\|}\right\|\leq\\
\leq 2\Gamma(1+\|x\|)\|s_1-s_2\|.
\end{multline} In order to prove the last estimate in (\ref{H_1_estima}) we need to show that if $\|s_1\|\geq
\|s_2\|$ then \begin{equation}\label{s_1_s_2}
\left\|\frac{\|s_2\|s_1}{\|s_1\|}-s_2\right\|\leq \|s_1-s_2\|.
\end{equation}
Let $z\in \mathbb{R}^n$ be a codirectional with $s_1$, let $\gamma$ be the angle between
$s_1$ and $s_2$: $$\cos\gamma=\frac{\langle s_1,s_2\rangle}{\|s_1\|\cdot\|s_2\|}. $$
Consider triangle formed by the origin and terminuses of $z$ and $s_2$.
The lengths of side of triangle are $\|z\|$, $\|s_2\|$ and $\|z-s_2\|$. By the cosine theorem we have
$$\|z-s_2\|^2=\|s_2\|^2+\|z\|^2-2\|z\|\|s_2\|\cos\gamma=\|s_2\|^2(1-\cos^2\gamma)+(\|z\|-\|s_2\|\cos\gamma)^2.
$$ Hence, the function  $\|z-s_2\|$ as a function of $\|z\|$ increases on the region $\|z\|\geq
\|s_2\|\cos\gamma$. Since $$\left\|\frac{\|s_2\|s_1}{\|s_1\|}\right\|=\|s_2\|\leq
\|s_1\|,
$$ the estimate (\ref{s_1_s_2}) holds.

Combining estimates (\ref{H_0_estima}) and (\ref{H_1_estima}) we get
\begin{equation}\label{H_final_estima}
    |H(t,x,s_1)-H(t,x,s_2)|\leq \Upsilon(1+\|x\|)\|s_1-s_2\| \probel
    \forall(t,x)\in\ir\probel\forall s_1,s_2\in\mathbb{R}^n.
\end{equation} Here $\Upsilon=2\Gamma$.
Using the definition of $H$ (see (\ref{H_large_def})), properties of function $h^*$ (see (\ref{h_star_lip})), we obtain that  function $H$ satisfies the condition H2.

Notice that for all $(t,x,s)\in [t_0,\vartheta_0]\times\mathbb{R}^n\times\mathbb{R}^n$ the following inequality holds:
$$|H(t,x,s)|\leq \Gamma\|s\|(1+\|x\|)\leq\Upsilon \|s\|(1+\|x\|). $$ This means that the function $H$ satisfies the condition H1.

Function $H$ is positively homogeneous by definition.

This completes the proof.
\end{proof}

\section{Construction of Game with the Given Hamiltonian}
The following lemma is close to the result of L.C.Evans and
P.E.Souganidis (see \cite{Evans}) about construction of differential games. We consider unbounded, locally lipschitzian hamiltonians but in  \cite{Evans} only bounded on $\ir\times S^{(n-1)}$, uniformly lipschitzian hamiltonians are considered.
\begin{Lm}\label{lm_Evans}
Let function $H:[t_0,\vartheta_0]\times\mathbb{R}^n\times\mathbb{R}^n\rightarrow
\mathbb{R}$ satisfy the conditions {\rm H1--H3}. Then there exist sets
$P,Q\in\mathrm{COMP}$ and function $f\in \mathrm{DYN}(P,Q)$ such that
\begin{equation}\label{H_Evans_prop} H(t,x,s)=\max_{v\in Q}\min_{u\in P}\langle
s,f(t,x,u,v)\rangle\probel \forall (t,x,s)\in\ir\times \mathbb{R}^n.
\end{equation}
\end{Lm}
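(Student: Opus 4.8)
The plan is to reduce the lemma, for each fixed $(t,x)$, to representing the positively homogeneous, $s$-Lipschitz function $s\mapsto H(t,x,s)$ as the upper envelope of a family of \emph{concave} positively homogeneous minorants, one for each unit vector. Concretely, I take $Q\triangleq S^{(n-1)}$ and seek, for every $v\in S^{(n-1)}$, a concave positively homogeneous function $g_v(s)=g_v(t,x,s)$ with $g_v(s)\le H(t,x,s)$ for all $s$ and $g_v(v)=H(t,x,v)$. Once such a family is found and each $g_v$ is written as an inner minimum $g_v(s)=\min_{u\in P}\langle s,f(t,x,u,v)\rangle$, one gets $\max_{v\in Q}\min_{u\in P}\langle s,f(t,x,u,v)\rangle=\max_{v}g_v(s)$; the two defining properties of $g_v$ then force this to equal $H(t,x,s)$, since $g_v\le H$ gives ``$\le$'' and evaluating at $v=s/\|s\|$ together with H3 gives ``$\ge$''.

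For the explicit minorant, writing $\kappa=\Upsilon(1+\|x\|)$ for the $s$-Lipschitz constant of $H(t,x,\cdot)$ provided by H2, I would set
\[
g_v(s)\triangleq H(t,x,v)\langle s,v\rangle-\kappa\,\|(I-vv^{\top})s\|-2\kappa\max\{-\langle s,v\rangle,0\}.
\]
Each summand is positively homogeneous and concave in $s$ (the first is linear, the second is minus a seminorm, the third equals $\min\{2\kappa\langle s,v\rangle,0\}$), so $g_v$ is concave and positively homogeneous, and $g_v(v)=H(t,x,v)$ because the last two terms vanish at $s=v$. The inequality $g_v\le H$ splits on the sign of $\langle s,v\rangle$: for $\langle s,v\rangle\ge0$ it is the Lipschitz estimate $H(t,x,s)\ge H(t,x,\langle s,v\rangle v)-\kappa\|(I-vv^{\top})s\|$ combined with homogeneity; for $\langle s,v\rangle<0$ the penalty term forces $g_v(s)\le-\kappa\|s\|\le H(t,x,s)$, and the coefficient $2\kappa$ is exactly what is needed because $H(t,x,v)\ge-\kappa$. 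Hence $\max_{v\in S^{(n-1)}}g_v=H(t,x,\cdot)$, the maximum being attained at $v=s/\|s\|$.

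To read off $P$ and $f$, I use $-\kappa\|(I-vv^{\top})s\|=\min_{\|w\|\le1}\langle s,\kappa(I-vv^{\top})w\rangle$ and $\min\{2\kappa\langle s,v\rangle,0\}=\min_{\mu\in[0,1]}\langle s,2\kappa\mu v\rangle$, so that the sum of minima is a single minimum over the product parameter $u=(w,\mu)$. Thus I take $P\triangleq\{w\in\mathbb{R}^n:\|w\|\le1\}\times[0,1]$ and $Q\triangleq S^{(n-1)}$ (both fixed finite-dimensional compacts) and
\[
f(t,x,(w,\mu),v)\triangleq H(t,x,v)\,v+\Upsilon(1+\|x\|)(I-vv^{\top})w+2\Upsilon(1+\|x\|)\,\mu\,v,
\]
which yields $\min_{u\in P}\langle s,f\rangle=g_v(s)$ and hence (\ref{H_Evans_prop}). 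Absorbing the factor $\Upsilon(1+\|x\|)$ into $f$ is what lets $P,Q$ remain independent of $(t,x)$; this is precisely where the argument departs from \cite{Evans}, where $H$ is bounded on $\ir\times S^{(n-1)}$.

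Finally I would check $f\in\mathrm{DYN}(P,Q)$. Condition F3 is immediate from H1 and $\|v\|=1$, giving $\|f\|\le\kappa+\kappa+2\kappa=4\Upsilon(1+\|x\|)$, i.e.\ $\Lambda_f=4\Upsilon$; continuity F1 follows from the joint continuity of $H$ read off from H2; and local Lipschitzness F2 follows from the $x$-Lipschitz estimate in H2 taken with $\|s\|=R=1$ together with the Lipschitzness of $\Upsilon(1+\|x\|)$. I expect the main obstacle to be the verification $g_v\le H$ on the ``back side'' $\langle s,v\rangle<0$ — this is exactly what fails for the naive linear-plus-seminorm minorant and is what forces the penalty term with the sharp constant $2\Upsilon(1+\|x\|)$ — together with the bookkeeping needed to keep $P$ and $Q$ fixed compacts while $H$ and its $s$-modulus grow like $1+\|x\|$.
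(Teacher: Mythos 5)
Your proof is correct, but it takes a genuinely different route from the paper's. The paper starts from the envelope identity $H(t,x,s)=\|s\|\max_{z\in B}\bigl[H(t,x,z)-\Upsilon(1+\|x\|)\|s/\|s\|-z\|\bigr]$ over the closed unit ball $B$, converts the norm into $\min_{y\in B}$ of an inner product, and then linearizes the remaining constant terms in $s$ by introducing a \emph{second} pair of controls $z',y'\in B$ (using that $H(t,x,z)+\Upsilon(1+\|x\|)$ and $\Upsilon(1+\|x\|)(1+\langle y,z\rangle)$ are nonnegative, so they can multiply $\max_{z'}\langle z',s\rangle=\|s\|$ and $\min_{y'}\langle y',s\rangle=-\|s\|$); a final interchange of the inner $\min_y$ and $\max_{z'}$ gives $P=Q=B\times B$ and $f=H(t,x,z)z'+\Upsilon(1+\|x\|)[z'+y+(1+\langle y,z\rangle)y']$. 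You instead build, for each unit vector $v$, an explicitly concave positively homogeneous minorant $g_v$ touching $H(t,x,\cdot)$ at $v$, using the orthogonal projection $I-vv^{\top}$ and a one-sided penalty $2\kappa\max\{-\langle s,v\rangle,0\}$; your verification on the half-space $\langle s,v\rangle<0$ (via $|H(t,x,v)|\le\kappa$, $|a|+b\ge\sqrt{a^2+b^2}$ and $H(t,x,s)\ge-\kappa\|s\|$) is sound, and the reduction of each $g_v$ to $\min_{(w,\mu)\in B\times[0,1]}\langle s,f\rangle$ is immediate since the two minimizations decouple. What your construction buys is that it avoids the homogenization trick and the minimax interchange entirely, at the cost of a slightly less symmetric $f$; both proofs handle the unbounded Hamiltonian the same way, by absorbing the factor $\Upsilon(1+\|x\|)$ into $f$ so that $P$ and $Q$ stay fixed compacts, and both verify F1--F3 from H1--H2 in essentially the same manner.
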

\begin{proof}
Denote $$B\triangleq\{s\in\mathbb{R}^n:\|s\|\leq 1\}.$$ By the condition H2 there exists a real number $\Upsilon$, such that for all $(t,x,s_1), (t,x,s_2)\in\ir\times \mathbb{R}^n$ the following estimate holds: $$|H(t,x,s_1)-H(t,x,s_2)|\leq \Upsilon(1+\|x\|)\|s_1-s_2\|. $$

Therefore,
\begin{multline*}
    H(t,x,s)=\|s\|H\left(t,x,\frac{s}{\|s\|}\right)=\|s\|\max_{z\in
    B}\left[H(t,x,z)-\Upsilon(1+\|x\|)\left\|\frac{s}{\|s\|}-z\right\|\right]=\\=
\|s\|\max_{z\in B}\min_{y\in B}\left[H(t,x,z)+\Upsilon(1+\|x\|)\left\langle
y,\frac{s}{\|s\|}-z\right\rangle\right]=\\= \|s\|\max_{z\in B}\min_{y\in
B}\left[(H(t,x,z)+\Upsilon(1+\|x\|))-\Upsilon(1+\|x\|)+\Upsilon(1+\|x\|)\left\langle
y,\frac{s}{\|s\|}-z\right\rangle\right]=\\= \max_{z\in B}\min_{y\in
B}[(H(t,x,z)+\Upsilon(1+\|x\|)\|s\|+\Upsilon(1+\|x\|)\langle
y,s\rangle-\Upsilon(1+\|x\|)(1+\langle y,z\rangle)\|s\|]
\end{multline*}

Since for all $y,z\in B$  $$H(t,x,z)+\Upsilon(1+\|x\|),\probel \Upsilon(1+\|x\|)(1+\langle y,z\rangle)\geq 0,
$$ it follows that
\begin{multline}\label{H_min_max_first}
    H(t,x,s)=\max_{z\in B}\min_{y\in
B}\max_{z'\in B}\min_{y'\in B}\\
[(H(t,x,z)+\Upsilon(1+\|x\|))\langle z',s\rangle+\Upsilon(1+\|x\|)\langle
y,s\rangle+\Upsilon(1+\|x\|)(1+\langle y,z\rangle)\langle y',s\rangle].
\end{multline} In formula (\ref{H_min_max_first}) one can 	interchange $\min_{y\in B}$ and $\max_{z'\in
B}$. Denoting $P=Q=B\times B$, and  
$$f(t,x,u,v)\triangleq
H(t,x,z)z'+\Upsilon(1+\|x\|)[z'+y+(1+\langle y,z\rangle) y'],$$ for $(t,x)\in \ir$, $u=(y,y')$,
$v=(z,z')$
we obtain that (\ref{H_Evans_prop}) is fulfilled. By definition of $f$ it follows that $f\in \mathrm{DYN}(P,Q)$.

\end{proof}

\begin{Lm}\label{lm_evans_min_max}
Let function
$H:[t_0,\vartheta_0]\times\mathbb{R}^n\times\mathbb{R}^n\rightarrow \mathbb{R}$
satisfy the conditions {\rm H1--H3}. Then there exist sets $P,Q\in\mathrm{COMP}$ and a function $f\in \mathrm{DYN}(P,Q)$ such that $$ H(t,x,s)=\min_{u\in P}\max_{v\in Q}\langle
s,f(t,x,u,v)\rangle\probel \forall (t,x,s)\in\ir\times \mathbb{R}^n.
$$
\end{Lm}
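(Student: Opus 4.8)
The plan is to reduce this statement to Lemma~\ref{lm_Evans} by means of a reflection symmetry that interchanges the roles of the two players. First I would introduce the reflected Hamiltonian
$$\tilde{H}(t,x,s)\triangleq -H(t,x,-s),\probel (t,x,s)\in\ir\times\mathbb{R}^n,$$
and check that it again satisfies H1--H3. Condition H1 is immediate since $\|-s\|=\|s\|$. Condition H3 follows because, for $\alpha\geq 0$, $\tilde{H}(t,x,\alpha s)=-H(t,x,-\alpha s)=-\alpha H(t,x,-s)=\alpha\tilde{H}(t,x,s)$. Condition H2 is preserved because the map $s\mapsto -s$ is an isometry, so $\|(-s')-(-s'')\|=\|s'-s''\|$ and the estimate for $H$ transfers verbatim to $\tilde{H}$ with the same $\omega_A$, $L_A$ and $\Upsilon$.

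Having verified this, I would apply Lemma~\ref{lm_Evans} to $\tilde{H}$. This produces compacts $P,Q\in\mathrm{COMP}$ and a dynamics $\tilde{f}\in\mathrm{DYN}(P,Q)$ with
$$\tilde{H}(t,x,s)=\max_{v\in Q}\min_{u\in P}\langle s,\tilde{f}(t,x,u,v)\rangle.$$
Unfolding the definition of $\tilde{H}$ and substituting $-s$ in place of $s$ gives
$$-H(t,x,s)=\max_{v\in Q}\min_{u\in P}\langle -s,\tilde{f}(t,x,u,v)\rangle=-\min_{v\in Q}\max_{u\in P}\langle s,\tilde{f}(t,x,u,v)\rangle,$$
where the last equality uses the elementary identities $\min_u(-g)=-\max_u g$ and $\max_v(-g)=-\min_v g$ applied successively to the inner and the outer extremum. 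Hence $H(t,x,s)=\min_{v\in Q}\max_{u\in P}\langle s,\tilde{f}(t,x,u,v)\rangle$.

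Finally I would relabel the players so that the set $P$ carries the minimizing variable, as required by the statement. Set $P'\triangleq Q$, $Q'\triangleq P$ (both in $\mathrm{COMP}$) and define $f(t,x,u',v')\triangleq \tilde{f}(t,x,v',u')$ for $u'\in P'$, $v'\in Q'$. Permuting the control arguments affects neither continuity, nor local Lipschitz dependence on $x$, nor the sublinear growth bound, so $f\in\mathrm{DYN}(P',Q')$; and by the previous display
$$\min_{u'\in P'}\max_{v'\in Q'}\langle s,f(t,x,u',v')\rangle=\min_{v\in Q}\max_{u\in P}\langle s,\tilde{f}(t,x,u,v)\rangle=H(t,x,s),$$
which is exactly the asserted representation. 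I do not expect a genuine obstacle here: the only points demanding care are the verification that the reflection preserves H2 (an isometry argument) and the bookkeeping of signs when turning the $\max\min$ into a $\min\max$, both of which are routine once $\tilde{H}$ is set up correctly.
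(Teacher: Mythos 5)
Your proof is correct, but it takes a genuinely different route from the paper's. The paper proves this lemma by mirroring the construction of Lemma~\ref{lm_Evans}: it starts from the dual representation
$$H(t,x,s)=\|s\|\min_{y\in B}\left[H(t,x,y)+\Upsilon(1+\|x\|)\left\|\|s\|^{-1}s-y\right\|\right]$$
(a minimum of affine-plus-cone functions instead of a maximum) and then repeats the same chain of $\min$--$\max$ manipulations with the roles of the extrema exchanged, ending with an explicit formula for $f$. You instead exploit the involution $\tilde H(t,x,s)=-H(t,x,-s)$, which preserves the class H1--H3 (your checks are right: $s\mapsto -s$ is a norm-preserving isometry, so H1 and H2 transfer with the same constants, and H3 survives because $-\alpha s=\alpha(-s)$ for $\alpha\ge 0$) and intertwines the $\max\min$ and $\min\max$ representations; you then invoke Lemma~\ref{lm_Evans} as a black box, do the sign bookkeeping $\min_{u}\langle -s,\cdot\rangle=-\max_{u}\langle s,\cdot\rangle$, and relabel the players, which clearly preserves membership in $\mathrm{DYN}$. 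Your argument buys economy and makes the player-swapping duality explicit; the paper's parallel construction buys a concrete dynamics $f$ expressed directly in terms of $H$ rather than in terms of the dynamics built for the auxiliary Hamiltonian. Both are complete proofs.
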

\begin{proof} $$H(t,x,s)=\|s\|\min_{y\in B}\left[H(t,x,y)+\Upsilon(1+\|x\|)\left\|\frac{s}{\|s\|}-y\right\|\right]. $$
Then the proof is similar  to the proof of previous lemma.
\end{proof}

\begin{Lm}\label{lm_evans_1} Let $n=1$, $H:[t_0,\vartheta_0]\times\mathbb{R}^n\times\mathbb{R}^n\rightarrow \mathbb{R}$
satisfy the conditions {\rm H1--H3}. Then there exist sets $P,Q\in\mathrm{COMP}$ and a function
$f\in \mathrm{DYNI}(P,Q)$ such that \begin{multline}\label{H_Evans_prop_min_max}
H(t,x,s)=\max_{v\in Q}\min_{u\in P}\langle s,f(t,x,u,v)\rangle=\min_{u\in P}\max_{v\in
Q}\langle s,f(t,x,u,v)\rangle\\ \forall (t,x,s)\in\ir\times \mathbb{R}^n.
\end{multline}
\end{Lm}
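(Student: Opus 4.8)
The goal of Lemma~\ref{lm_evans_1} is, in the scalar case $n=1$, to realize a Hamiltonian satisfying H1--H3 by a dynamics $f$ that obeys the Isaacs condition, so that the $\max\min$ and $\min\max$ representations coincide. The plan is to exploit the special structure of the one-dimensional sphere: $S^{(0)}=\{-1,+1\}$ consists of exactly two points. Thus by positive homogeneity (condition H3) the Hamiltonian is completely determined by the two values $H(t,x,1)$ and $H(t,x,-1)$, and for a generic $s\in\mathbb{R}$ we have $H(t,x,s)=s\,H(t,x,1)$ when $s\geq 0$ and $H(t,x,s)=|s|\,H(t,x,-1)=-s\,H(t,x,-1)$ when $s\leq 0$. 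So the entire Hamiltonian collapses to two scalar functions of $(t,x)$.

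**Reducing to a convex-combination form.** First I would write $\langle s,f\rangle$ in the scalar case simply as $s\cdot f(t,x,u,v)$, so that the $\max_v\min_u$ and $\min_u\max_v$ operations act on the affine-in-$s$ quantity $s\cdot f$. The key observation is that in one dimension the inner optimization over the sign of $s$ is trivial once $f$ ranges over an interval: if the attainable set $\{f(t,x,u,v):u\in P,v\in Q\}$ is designed to be the closed interval $[\,-H(t,x,-1),\,H(t,x,1)\,]$ (recall $f$ here is a \emph{velocity}, and $H(t,x,1)=\max f$, $-H(t,x,-1)=\min f$ would be forced by the representation), then both $\max_v\min_u s\cdot f$ and $\min_u\max_v s\cdot f$ equal $s\cdot(\max f)$ for $s>0$ and $s\cdot(\min f)$ for $s<0$, and these agree. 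The cleanest route is therefore to define $f$ so that it does not actually depend on one of the players in a way that breaks the saddle: I would take the construction of Lemma~\ref{lm_Evans} (or Lemma~\ref{lm_evans_min_max}) and verify that, \emph{because $n=1$}, the resulting $f$ already satisfies the Isaacs condition, i.e. the min and max interchange. Concretely, one shows $\min_{u\in P}\max_{v\in Q}s\cdot f=\max_{v\in Q}\min_{u\in P}s\cdot f$ for every scalar $s$, since for each fixed sign of $s$ the bilinear expression is monotone in each player's control over a product of intervals, and a bilinear (saddle) function on a product of compact convex sets satisfies the minimax equality by the classical minimax theorem.

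**Assembling $f$ and checking the hypotheses.** I would set $P=Q=B=[-1,1]$ (the unit ball in $\mathbb{R}^1$) and define $f$ by the analogue of the formula in Lemma~\ref{lm_Evans}, so that for $s\geq 0$ we recover $H(t,x,s)=s\,H(t,x,1)$ and for $s\leq 0$ we recover $H(t,x,s)=-s\,H(t,x,-1)$; the sublinear growth H1 guarantees these values are bounded by $\Upsilon\|s\|(1+\|x\|)$, so $f$ inherits the bound F3, while the Lipschitz estimate H2 gives F1--F2 for $f$ exactly as in the proof of Lemma~\ref{lm_Evans}. Then I would verify the two representations directly: both the $\max\min$ and the $\min\max$ reduce, sign-by-sign in $s$, to the same scalar extremum of $s\cdot f$ over the product $P\times Q$, whence \eqref{H_Evans_prop_min_max} holds and $f\in\mathrm{DYNI}(P,Q)$.

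**The main obstacle.** The delicate point is \emph{not} the growth or Lipschitz bookkeeping, which transfers verbatim from Lemma~\ref{lm_Evans}; it is showing that one can arrange the \emph{same} $f$ to furnish both the upper and lower Hamiltonian simultaneously, i.e. that the Isaacs condition genuinely holds rather than merely one of the two representations. In higher dimensions $H^{(-)}$ and $H^{(+)}$ may differ, and the constructions in Lemmas~\ref{lm_Evans} and~\ref{lm_evans_min_max} produce different dynamics; the scalar case is special precisely because $s\cdot f$ is an affine function of $s$ whose extrema over an interval of velocities are sign-determined and hence order-independent in the two players. I expect the proof to hinge on exhibiting $f$ explicitly (or invoking the minimax theorem for the bilinear saddle form $s\cdot f(t,x,u,v)$ on $B\times B$) and confirming that the inner and outer optima coincide for each sign of $s$, the two cases $s\geq 0$ and $s\leq 0$ being handled symmetrically.
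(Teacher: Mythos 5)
You correctly isolate the special feature of $n=1$ (the sphere $S^{(0)}=\{-1,1\}$ has two points, so $H$ is determined by $H(t,x,\pm 1)$), and your overall architecture — rerun the construction of Lemma~\ref{lm_Evans} with this two-point sphere and then check the Isaacs condition for the resulting $f$ — is the paper's. But the step you yourself flag as the main obstacle is not correctly justified. First, the intermediate claim that if the attainable velocity set $\{f(t,x,u,v)\}$ is an interval then $\max_{v}\min_{u}s\cdot f=s\cdot\max f$ for $s>0$ is false: for $f=u+v$ on $[-1,1]^2$ one has $\max_v\min_u s(u+v)=0$ while $s\cdot\max f=2s$. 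The lower value is $s\cdot\max_v\min_u f$, not $s\cdot\max f$, and making it equal to $H(t,x,1)$ is exactly what has to be arranged, not assumed. Second, the appeal to ``the classical minimax theorem for the bilinear saddle form $s\cdot f$'' does not go through for the Evans-type dynamics: with $u=(y,y')$, $v=(z,z')$ the payoff
\begin{equation*}
\langle s,f\rangle=(H(t,x,z)+\Upsilon(1+\|x\|))\langle z',s\rangle+\Upsilon(1+\|x\|)\langle y,s\rangle+\Upsilon(1+\|x\|)(1+\langle y,z\rangle)\langle y',s\rangle
\end{equation*}
is neither concave in $(z,z')$ (the factor $H(t,x,z)$ is merely Lipschitz, multiplying the linear term $\langle z',s\rangle$) nor convex in $(y,y')$ (the term $(1+\langle y,z\rangle)\langle y',s\rangle$ is bilinear in $(y,y')$), so neither von Neumann's nor Sion's theorem applies; ``monotone in each player's control'' only places the optima at corners, and the optimal corner for $u$ could a priori depend on $v$.

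What actually closes the argument in the paper is a concrete structural fact, not a general minimax principle: taking $z,z',y,y'\in\{-1,1\}$, the inner minimum over $(y,y')$ is attained at $y=y'=-\mathrm{sgn}(s)$ \emph{independently of} $(z,z')$ (because $1+\langle y,z\rangle\ge 0$ and $\langle y,s\rangle$ is minimized by $y=-\mathrm{sgn}(s)$, with a short check that this choice also dominates in the coupled term). Plugging this fixed $(y,y')$ in shows $\max_{z,z'}\min_{y,y'}\langle s,g\rangle=\max_{z,z'}\langle s,g(t,x,-\mathrm{sgn}(s),-\mathrm{sgn}(s),z,z')\rangle\ge\min_{y,y'}\max_{z,z'}\langle s,g\rangle$, and the reverse inequality is automatic, giving the Isaacs condition. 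Your write-up gestures at this (``define $f$ so that it does not actually depend on one of the players in a way that breaks the saddle'') but never establishes it; as it stands the proof of the interchange of $\min$ and $\max$ is missing.
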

\begin{proof}
If $s\neq 0$, then
\begin{multline}\label{H_n_1_repres}
    H(t,x,s)=\max_{z\in\{-1,1\}}[H(t,x,z)\|s\|-\Upsilon(1+\|x\|)\langle s, \|s\|^{-1}s-z\rangle]=\\=
\max_{z\in\{-1,1\}}\min_{y\in \{-1,1\}}[H(t,x,z)\|s\|+\Upsilon(1+\|x\|)\|s\|\langle y, \|s\|^{-1}s-z\rangle]=\\=
\max_{z\in \{-1,1\}}\min_{y\in \{-1,1\}}[(H(t,x,z)+\Upsilon(1+\|x\|))\|s\|+\Upsilon(1+\|x\|)(\langle y, s\rangle-\|s\|(\langle y,z\rangle+1))]=\\=
\max_{z\in \{-1,1\}}\min_{y\in \{-1,1\}}\max_{z'\in\{-1,1\}}\min_{y'\in \{-1,1\}}
\\
[(H(t,x,z)+\Upsilon(1+\|x\|))\langle z',s\rangle+\Upsilon(1+\|x\|)\langle y, s\rangle+\Upsilon(1+\|x\|)(\langle y,z\rangle+1)\langle y',s\rangle]
\end{multline}

Note that $\min_{y\in \{-1,1\}}$ and $\max_{z'\in \{-1,1\}}$ are permutable. Denote $$g(t,x,y,y',z,z')\triangleq (H(t,x,z)+\Upsilon(1+\|x\|)) z'+\Upsilon(1+\|x\|)y+\Upsilon(1+\|x\|)(\langle y,z\rangle+1) y'.$$ Thus, for $s\in \mathbb{R}$ the following representation is fulfilled:
$$H(t,x,s)=\max_{z,z'\in \{-1,1\}}\min_{y,y'\in \{-1,1\}}\langle s,g(t,x,y,y',z,z')\rangle. $$

In addition, by (\ref{H_n_1_repres}) and definition of $g$ we obtain for $s\neq 0$ the following representation:
\begin{multline*}
    H(t,x,s)=\max_{z,z'\in \{-1,1\}}\min_{y,y'\in \{-1,1\}}\langle s,g(t,x,y,y',z,z')\rangle=\\=
\max_{z,z'\in \{-1,1\}}\langle s,g(t,x,-\|s\|^{-1}s,-\|s\|^{-1}s,z,z')\rangle\geq
\min_{y,y'\in \{-1,1\}}\max_{z,z'\in \{-1,1\}}\langle s,g(t,x,y,y',z,z')\rangle.
\end{multline*} Therefore, for all $s\in \mathbb{R}$ the following inequality holds
$$
 H(t,x,s)=\max_{z,z'\in \{-1,1\}}\min_{y,y'\in \{-1,1\}}\langle s,g(t,x,y,y',z,z') \rangle \geq
\min_{y,y'\in \{-1,1\}}\max_{z,z'\in \{-1,1\}}\langle s,g(t,x,y,y',z,z')\rangle.
$$

Opposite inequality is obvious.

Denote $P=Q=\{-1,1\}\times \{-1,1\}$, $u=(y,y')$, $v=(z,z')$,
$f(t,x,u,v)=g(t,x,y,y',z,z')$. We have, $f\in \mathrm{DINI}$ and $$H(t,x,s)=\max_{v\in Q}\min_{u\in P}\langle s, f(t,x,u,v)\rangle. $$

\end{proof}

\section{Construction of the Differential Games whose Value Coincides with a Given Function}
In this section we prove the statements formulated in the section~\ref{sect_main}.
\begin{proof}[Proof of the Main Theorem]
{\it Necessity}.

Let $\varphi\in\mathrm{Lip}_B\cap\mathrm{VALF}$. Then by definition of
$\mathrm{VALF}$ there exist the sets $P,Q\in \mathrm{COMP}$, and the function $f\in\mathrm{DYN}(P,Q)$,
$\sigma\in\mathrm{TP}$ such that
$\varphi=Val^f(\cdot,\cdot,P,Q,f,\sigma)$. Therefore (see \cite{Subb_book})
$\varphi$ is a minimax solution of the equation
$$ \frac{\partial\varphi}{\partial t}+H(t,x,\nabla\varphi)=0
$$ with $$H(t,x,s)=\max_{v\in Q}\min_{u\in P}\langle s,f(t,x,u,v)\rangle. $$ Consider the function $h$ defined by formula (\ref{h_def_nabla}) on $J$. Note that $J$ means the set of differentiability of
$\varphi$. We have $$h(t,x,s)=H(t,x,s),\probel (t,x)\in J, s\in E(t,x). $$ Let $(t,x)$ be a position at which  function $\varphi$ is 	nondifferentiable, $s\in E_1(t,x)$.
Denote
\begin{multline*}
L\varphi(t,x,s)\triangleq\{a\in\mathbb{R}:\exists \{(t_i,x_i)\}_{i=1}^\infty\subset J:\\\probel (t,x,s)=\lim_{i\rightarrow\infty}(t_i,x_i,\nabla\varphi(t_i,x_i))\probel\&\probel a=\lim_{i\rightarrow \infty}\partial\varphi(t_i,x_i)/\partial t\}.
\end{multline*}
Since  $\partial\varphi(t,x)/\partial t=-H(t,x,\nabla\varphi(t,x))$ for $(t,x)\in J$,
the continuity $H$ yields that $$L\varphi(t,x,s)=\{-H(t,x,s)\},\ \
(t,x)\notin J,\ \ s\in E_1(t,x).$$ Thus function $h=H$ satisfies the condition
(E1). In addition, function $h(t,x,s)=H(t,x,s)$ is determined by (\ref{h_def_lim}) for $(t,x)\notin J$, $s\in E_1(t,x)$. We have that
$h=H$ on $\mathbb{E}_1$.

Set the extension of $h$ to $\mathbb{E}_2$ to be equal to $H$.
Since $\varphi$ is minimax solution of Hamilton-Jacobi equation, we get that for all
$(t,x)\in \ir$ the following inequalities hold
$$a+H(t,x,s)\leq 0\probel \forall(a,s)\in D^-\varphi(t,x).
$$
$$a+H(t,x,s)\geq 0\probel \forall(a,s)\in D^+\varphi(t,x).
$$
If function $\varphi$ is not differentiable at  $(t,x)$ and $D^-\varphi(t,x)\cup D^+\varphi(t,x)\neq \varnothing$, then either  $(t,x)\in CJ^-$ or $(t,x)\in CJ^+$. Let $(t,x)\in CJ^-$. Consider
$\lambda_1,\ldots\lambda_{n+2}\in [0,1]$ and $s_1,\ldots,s_{n+2}\in E_1(t,x)$ such that
$\sum\lambda_i=1$ and
$$\left(-\sum_{k=1}^{n+2}\lambda_k H(t,x,s_k), \sum_{k=1}^{n+2}\lambda_k s_k\right)\in D^-(t,x). $$
Therefore,
$$-\sum_{k=1}^{n+2}\lambda_k H(t,x,s_k)+H\left(t,x,\sum_{k=1}^{n+2}\lambda_k s_k\right)\leq 0. $$
Similarly, if $(t,x)\in CJ^+$,
$\lambda_1,\ldots\lambda_{n+2}\in [0,1]$ and $s_1,\ldots,s_{n+2}\in E_1(t,x)$ satisfy the conditions
$\sum\lambda_i=1$
$$\left(-\sum_{k=1}^{n+2}\lambda_k H(t,x,s_k), \sum_{k=1}^{n+2}\lambda_k s_k\right)\in D^+(t,x),
$$ then the following inequality is fulfilled: $$-\sum_{k=1}^{n+2}\lambda_k H(t,x,s_k)+H\left(t,x,\sum_{k=1}^{n+2}\lambda_k s_k\right)\geq 0. $$

We get that function $h=H$ satisfies the condition (E2).

The condition (E3) holds since $H$ is positively homogeneous.
Note that $h^\natural(t,x,s)=H(t,x,s)\probel\forall (t,x,s)\in\mathbb{E}^\natural$.
Since $H$ satisfies the conditions H1 and H2, condition (E4) is fulfilled also.


\end{proof}
\begin{proof}[Proof of the Main Theorem]
{\it Sufficiency}.

Consider the function  $h$ is defined on $\mathbb{E}_1$ by formulas
(\ref{h_def_nabla}) and (\ref{h_def_lim}). By the assumption there exists the extension of $h$  to $\mathbb{E}$ which satisfies the conditions (E2)--(E4). By lemma \ref{lm_McShane}
there exists the function $H:\ir\times \mathbb{R}^n\rightarrow\mathbb{R}$ which is extension of $h$ and satisfies the conditions H1--H3. By lemma
\ref{lm_Evans} there exist compacts $P,Q\in\mathrm{COMP}$ and function
$f\in\mathrm{DYN}(P,Q)$ such that \begin{equation}\label{h_star_repres}
H(t,x,s)=\max_{u\in P}\min_{v\in Q}\langle s, f(t,x,u,v)\rangle.
\end{equation} Put
\begin{equation}\label{sigma_def} \sigma(x)\triangleq\varphi(\vartheta_0,x).
\end{equation} Since $\varphi\in\mathrm{Lip}_B$, we get $\sigma\in \mathrm{TP}$. Let us show that $\varphi=Val^f(\cdot,\cdot,P,Q,f,\sigma)$. This is equivalent to the requirement that $\varphi$ satisfies the conditions (\ref{boundar_cond}), (\ref{U4}) and (\ref{L4}).

Obviously, the boundary condition (\ref{boundar_cond}) is valid by definition of $\sigma$. Let us show that $\varphi$ the conditions (\ref{U4}) and (\ref{L4}) are valid.

If $(t,x)\in J$, then
$\SDdn{\varphi}{t,x}=\SDup{\varphi}{t,x}=\{(\partial\varphi(t,x)/\partial
t,\nabla\varphi(t,x))\}$
$$\frac{\partial\varphi(t,x)}{\partial t}=-h(t,x,\nabla\varphi(t,x))=-H(t,x,\nabla\varphi(t,x)).
$$ Therefore, for $(t,x)\in J$ the inequalities (\ref{U4}) and (\ref{L4}) hold.

Now consider $(t,x)\notin J$.
By the properties Clarke subdifferential and function $h$  (see (\ref{cl_repr_s1}), (\ref{incl_dini_clark})) it follows that
\begin{equation}\label{dini_clark_inkl_tx}
\SDdn{\varphi}{t,x},\SDup{\varphi}{t,x}\subset{\rm co}\{(-h(t,x,s),s):s\in E_1(t,x)\}.
\end{equation}

If $(a,s)\in \SDdn{\varphi}{t,x}$ (in this case $(t,x)\in CJ^-$), then there exist
$\lambda_1,\ldots,\lambda_{n+2}\in [0,1]$, $s_1,\ldots,s_{n+2}\in E_1(t,x)$ such that
$\sum\lambda_k=1$, $\sum\lambda_k s_k=s$, $-\sum \lambda_k h(t,x,s_k)=a$ (see  (\ref{dini_clark_inkl_tx})). Using condition (E2) we obtain  $$h(t,x,s)\leq \sum\lambda_k h(t,x,s_k)=-a. $$ This is equivalent to the condition (\ref{U4}).  Similarly the truth of (\ref{L4}) can be proved. Thus,
 $\varphi$ is minimax solution (\ref{HJ}) with boundary condition (\ref{boundar_cond}). By \cite{Subb_book} and (\ref{h_star_repres}) it follows that
$\varphi=Val^f(\cdot,\cdot,P,Q,f,\sigma)$. This completes the proof.

\end{proof}

\begin{proof}[Proof of Corollary \ref{coll_suff}] The condition (E1) is valid by the assumption.
If $(t,x)\in CJ^-$, $s\in E_2(t,x)$, then for any $\lambda_1,\ldots
\lambda_{n+2}\in [0,1]$, $s_1\ldots,s_{n+2}$ such that $\sum\lambda_k=1$, $\sum\lambda_k
s_k=s$, the following inclusion holds: $$\left(-\sum_{k=1}^{n+2}\lambda_kh(t,x,s_k),\sum_{k=1}^{n+2}\lambda_k
s_k\right)\in D^-\varphi(t,x).
$$ By assumption $$h(t,x,s)=\sum_{k=1}^{n+2}\lambda_kh(t,x,s_k). $$ Therefore the first part of condition (E2) is fulfilled.
In the same way the second part of (E2) can be proved. The conditions (E3) and (E4) hold by assumption. Therefore $\varphi\in\mathrm{VALF}$.

\end{proof}

\begin{proof}[Proof of Corollary \ref{coll_eq}]
Let $\varphi\in \mathrm{VALF}$. There exist sets $P,Q$ and function $f\in \mathrm{DYN}(P,Q)$, $\sigma\in \mathrm{TP}$ such that
$\varphi=Val^f(\cdot,\cdot,P,Q,f,\sigma)$. By lemma
\ref{lm_evans_min_max} there exist sets $P',Q'\in\mathrm{COMP}$ and function
$f'\in\mathrm{DYN}(P,Q)$ such that for any $(t,x)\in \ir$ $s\in\mathrm{R}^n$ the following equality holds: $$\max_{v\in Q}\min_{u\in P}\langle s,f(t,x,u,v)\rangle=H(t,x,s)=\min_{u\in
P'}\max_{v\in Q'}\langle s,f'(t,x,u,v)\rangle.
$$ Consequently, $\varphi=Val^s(\cdot,\cdot,P',Q',f',\sigma)\in
\mathrm{VALS}$. Thus,
$$ \mathrm{VALF}\subset\mathrm{VALS}.$$ The opposite inclusion is proved in the similar way.
\end{proof}

\begin{proof}[Proof of Corollary \ref{Th_dim_1}]
Obviously, $$\mathrm{VALI}\subset \mathrm{VALF}=\mathrm{VALS}.$$ We shall prove that if  $n=1$ then \begin{equation}\label{Isaacs_incl}
\mathrm{VALF}\subset \mathrm{VALI}.
\end{equation} Let $\varphi\in \mathrm{VALF}$. By definition of
$\mathrm{VALF}$ there exist sets $P,Q\in\mathrm{COMP}$ and functions
$f\in\mathrm{DYN}(P,Q)$, $\sigma\in \mathrm{TP}$ such that
$$\varphi=Val^f(\cdot,\cdot,P,Q,f,\sigma). $$ By lemma \ref{lm_evans_1}
there exist sets $P_1,Q_1\in\mathrm{COMP}$ and function
$f_1\in\mathrm{DYNI}(P,Q)$ such that
$$\min_{u\in P_1}\max_{v\in Q_1}\langle s,f_1(t,x,u,v)=H(t,x,s)=\max_{v\in Q}\min_{u\in P}\langle s,f(t,x,u,v)\rangle. $$
 Thus $\varphi= {Val}(\cdot,\cdot,P_1,Q_1,f_1,\sigma)$. Therefore, the inclusion
 (\ref{Isaacs_incl}) holds.
\end{proof}


\begin{thebibliography}{99}
\bibitem{NN_PDG_en} {\it Krasovskii N.N., Subbotin A.I.} Game-Theoretical Control
Problems, New York: Springer, 1988, 517 p.
\bibitem{Subb_book} {\it Subbotin A.I.} Generalized solutions of first-order PDEs. The dynamical perspective, Systems \& Control: Foundations \& Applications, Birkhauser, Boston, Ins., Boston MA, 1995, 312 p.

\bibitem{Bardi}{\it Bardi M, Capuzzo-Dolcetta I.} {Optimal control and
viscosity solutions of Hamilton-Jacobi-Bellman equations.
With appendices by Maurizio
Falcone and Pierpaolo Soravia, Boston.
Systems \& Control: Foundations \& Applications.
Birkhauser Boston, Inc. 1997, xviii+570 pp.}
\bibitem{DemRub} {\it Demyanov V.F., Rubinov A.M.} Foundations of Nonsmooth Analysis, and Quasidifferential Calculus, Optimization and Operation Research, v. 23, Nauka, Moscow, 1990, 431pp.
\bibitem{Evans_measure}  \textit{Evans L.C.,  Gariepy R.F.}, Measure Theory and Fine Properties of Functions: CRC, Boca Raton, 1992

\bibitem{McShane} {\it McShane E. J.} Extension of range of function // Bull.Amer.Math.Soc.
1934. V. 40. №12, Pp 837--842.
\bibitem{Evans} {\it Evans L.C., Souganidis P.E.} Differential games and representation
formulas for solutions of Hamilton-Jacobi-Isaacs Equations // Indiana University
Mathematical Journal, 1984, Vol. 33, N 5, Pp. 773--797.
\end{thebibliography}
\end{document}